\theoremstyle{plain}
\newtheorem{theorem}{Theorem}
\newtheorem{corollary}[theorem]{Corollary}
\newtheorem{proposition}[theorem]{Proposition}
\newtheorem{lemma}[theorem]{Lemma}
\theoremstyle{definition}
\newtheorem{example}[theorem]{Example}
\theoremstyle{remark}
\newtheorem{remark}[theorem]{Remark}
\DeclareMathOperator{\diag}{diag}
\DeclareMathOperator{\ord}{ord}
\DeclareMathOperator{\im}{im}
\newcommand{\SSS}{\mathbb{S}}
\numberwithin{equation}{section}
\numberwithin{theorem}{section}
\def\@strippedMR{}
\def\@scanforMR#1#2#3\endscan{%
  \ifx#1M\ifx#2R\def\@strippedMR{#3}%
  \else\def\@strippedMR{#1#2#3}%
  \fi\fi}
\renewcommand\MR[1]{\relax
  \ifhmode\unskip\spacefactor3000 \space\fi
  \@scanforMR#1\endscan
  MR\MRhref{\@strippedMR}{\@strippedMR}}
\begin{document}

\title{On a parametrization of positive semidefinite matrices with zeros} 
\date{\today} 
\author{Mathias Drton}
\address{Department of Statistics, The University of Chicago, Chicago,
  Illinois, U.S.A.}  
\email{drton@uchicago.edu}

\author{Josephine Yu}
\address{School of Mathematics, Georgia Institute of Technology, Atlanta, Georgia,
  U.S.A.}   
\email{josephine.yu@math.gatech.edu}

\begin{abstract} 
  We study a class of parametrizations of convex cones of positive
  semidefinite matrices with prescribed zeros. Each such cone
  corresponds to a graph whose non-edges determine the prescribed
  zeros. Each parametrization in this class is a polynomial map
  associated with a simplicial complex supported on cliques of the
  graph. The images of the maps are convex cones, and the maps can
  only be surjective onto the cone of zero-constrained positive
  semidefinite matrices when the associated graph is chordal and the
  simplicial complex is the clique complex of the graph. Our main
  result gives a semi-algebraic description of the image of the
  parametrizations for chordless cycles. The work is motivated by the
  fact that the considered maps correspond to Gaussian statistical
  models with hidden variables.
\end{abstract}

\keywords{}

\maketitle

\section{Introduction}
\label{sec:introduction}

For a positive integer $m$, let $[m]=\{1,\dots,m\}$.  Denote the power
set of $F\subseteq[m]$ by $2^{F}$.  A collection of subsets
$\Delta\subseteq 2^{[m]}$ is a simplicial complex if
$2^F\subseteq\Delta$ for all $F\in \Delta$.  The elements of $\Delta$
are called faces and the inclusion-maximal faces are the facets.  The
ground set of $\Delta$ is the union of its faces.  The underlying
graph $G(\Delta)$ is the simple undirected graph with the ground set
as vertex set and the 2-element faces as edges.  All simplicial
complexes appearing in this paper are assumed to have ground set $[m]$
  and, thus, all underlying graphs have vertex set $[m]$.  We make this
assumption explicit by speaking of a simplicial complex on $[m]$.

Let $\SSS^m$ be the $m(m+1)/2$ dimensional vector space of symmetric
$m\times m$ matrices.  For an undirected graph $G$ with vertex set
$V(G)=[m]$ and edge set $E(G)$, define the $|E(G)| + m$ dimensional subspace
\[
\SSS^m(G) \quad=\quad\{\, \Sigma=(\sigma_{ij})\in \SSS^m \::\:
\sigma_{ij} = 0 \; \text{if} \; i\not= j \; \text{and}\;
\{i,j\}\not\in E(G)\,\}
\]
containing the symmetric matrices with zeros at the non-edges of $G$.
Let $\SSS_{\succeq 0}^m\subset \SSS^m$ be the convex cone of positive
semidefinite matrices and $\SSS_{\succeq 0}^m(G)=\SSS_{\succeq
  0}^m\cap \SSS^m(G)$ the convex subcone of matrices with zeros
prescribed by the graph.

This paper is concerned with particular parametrizations of the
graphical cone $\SSS_{\succeq 0}^m(G)$.  For a subset $\Delta\subseteq
2^{[m]}$, define the polynomial map
\[
\phi_\Delta : \prod_{F \in \Delta} \mathbb{R}^{|F|} \rightarrow
\mathbb{S}_{\succeq 0}^m
\]
given by
\[
\phi_\Delta(\gamma) = \Gamma(\gamma)\Gamma(\gamma)^T,
\]
where, for $\gamma = (\gamma_{i, F} : F \in \Delta, i \in F)$,
the $[m] \times \Delta$ matrix $\Gamma(\gamma)$ has entries
\begin{equation}
\label{eq:def-Gamma-gamma}
\Gamma(\gamma)_{i,F} = 
\begin{cases}
\gamma_{i,F} & \mbox{ if } i \in F,\\
0 & \mbox{ otherwise}.
\end{cases}
\end{equation}
The coordinates of the map are
\begin{equation}
  \label{eq:phi-coords}
  \phi_\Delta(\gamma)_{ij} = 
  \sum_{F\in\Delta\,:\, i,j\in F} \gamma_{i,F}\gamma_{j,F}, \qquad
  i,j\in [m].
\end{equation}
In particular, the diagonal coordinates
\begin{equation}
  \label{eq:phi-coords-diag}
  \phi_\Delta(\gamma)_{ii} = 
  \sum_{F\in\Delta\,:\, i\in F} \gamma_{i,F}^2, \qquad i\in[m],
\end{equation}
are sums of squares, which implies that $\phi_\Delta$ is a proper map,
that is, compact sets have compact preimages under $\phi_\Delta$.

We will be interested in the situation when $\Delta$ is a simplicial
complex on $[m]$.  In this case, the map $\phi_\Delta$ is never
injective.  It has fibers (preimages) of positive dimension unless the
underlying graph is the empty graph.

\begin{lemma}
  \label{lem:full-dim}
  For any simplicial complex $\Delta$ on $[m]$ with underlying graph
  $G=G(\Delta)$, the image of $\phi_\Delta$ is a closed
  full-dimensional semi-algebraic subset of $\mathbb{S}_{\succeq
    0}^m(G)$.
\end{lemma}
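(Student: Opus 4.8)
The plan is to verify the four assertions hidden in the statement one at a time: that $\operatorname{im}\phi_\Delta$ is contained in $\mathbb{S}^m_{\succeq 0}(G)$, that it is semi-algebraic, that it is closed, and that it has dimension $m+|E(G)|=\dim\mathbb{S}^m(G)$. The containment is the quickest point: every value $\phi_\Delta(\gamma)=\Gamma(\gamma)\Gamma(\gamma)^T$ is positive semidefinite, so only the prescribed zeros need checking. If $i\neq j$ and $\{i,j\}\notin E(G)$, then no face $F\in\Delta$ contains both $i$ and $j$, for otherwise $\{i,j\}\subseteq F$ would be a face, hence a $2$-element face, hence an edge of $G(\Delta)=G$; thus the sum in \eqref{eq:phi-coords} is empty and $\phi_\Delta(\gamma)_{ij}=0$.

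For semi-algebraicity I would note that $\operatorname{im}\phi_\Delta$ is the projection to the $\Sigma$-coordinates of the polynomially-defined set $\{(\gamma,\Sigma):\Sigma=\phi_\Delta(\gamma)\}$, so this is immediate from Tarski--Seidenberg. For closedness I would use that $\phi_\Delta$ is proper, as already observed above: if $\phi_\Delta(\gamma^{(n)})\to\Sigma$, then $\sum_{F\in\Delta,\,i\in F}(\gamma^{(n)}_{i,F})^2=\operatorname{tr}\phi_\Delta(\gamma^{(n)})$ stays bounded by \eqref{eq:phi-coords-diag}, so a subsequence of $(\gamma^{(n)})$ converges to some $\gamma$ and $\phi_\Delta(\gamma)=\Sigma$ by continuity, whence $\Sigma\in\operatorname{im}\phi_\Delta$. (Equivalently, a proper continuous map into a locally compact Hausdorff space is closed.)

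The remaining and main point is full-dimensionality. Here I would first reduce to the $1$-dimensional complex $\Delta_1=\{\emptyset\}\cup\{\{i\}:i\in[m]\}\cup E(G)$, which is a simplicial complex on $[m]$ with $G(\Delta_1)=G$ and satisfies $\Delta_1\subseteq\Delta$ (every singleton lies in $\Delta$ since $[m]$ is the ground set, and every edge of $G$ is a $2$-face of $\Delta$). Padding a tuple for $\Delta_1$ by zeros on the faces in $\Delta\setminus\Delta_1$ appends zero columns to $\Gamma$ and leaves $\Gamma\Gamma^T$ unchanged, so $\operatorname{im}\phi_{\Delta_1}\subseteq\operatorname{im}\phi_\Delta$, and it suffices to show $\operatorname{im}\phi_{\Delta_1}$ is full-dimensional in $\mathbb{S}^m(G)$. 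For $\Delta_1$ one has $\phi_{\Delta_1}(\gamma)_{ij}=\gamma_{i,\{i,j\}}\gamma_{j,\{i,j\}}$ for $\{i,j\}\in E(G)$ and $\phi_{\Delta_1}(\gamma)_{ii}=\gamma_{i,\{i\}}^2+\sum_{j:\{i,j\}\in E(G)}\gamma_{i,\{i,j\}}^2$. I would then evaluate the Jacobian at the tuple $\gamma^0$ with $\gamma^0_{i,\{i\}}=1$ for all $i$ and, for each edge $\{i,j\}$ with $i<j$, $\gamma^0_{j,\{i,j\}}=1$ and $\gamma^0_{i,\{i,j\}}=0$. Restricting to the $m+|E(G)|$ columns indexed by the variables $\gamma_{i,\{i\}}$ and the variables $\gamma_{i,\{i,j\}}$ with $i<j$, and pairing the output coordinate $(i,i)$ with the column $\gamma_{i,\{i\}}$ and the output coordinate $(i,j)$ with the column $\gamma_{i,\{i,j\}}$ ($i<j$), one gets a diagonal matrix: in row $(i,i)$ the only surviving entry is $\partial\phi_{ii}/\partial\gamma_{i,\{i\}}=2\gamma^0_{i,\{i\}}=2$ (the remaining potential entries $2\gamma^0_{i,\{i,j\}}$ with $i<j$ vanish), and in row $(i,j)$ the only surviving entry is $\partial\phi_{ij}/\partial\gamma_{i,\{i,j\}}=\gamma^0_{j,\{i,j\}}=1$. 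This $(m+|E(G)|)\times(m+|E(G)|)$ submatrix is nonsingular, so $d\phi_{\Delta_1}$, viewed as a map into $\mathbb{S}^m(G)$, is surjective at $\gamma^0$; hence $\operatorname{im}\phi_{\Delta_1}$ contains a relatively open subset of $\mathbb{S}^m(G)$, and since $\mathbb{S}^m_{\succeq 0}(G)$ is itself full-dimensional in $\mathbb{S}^m(G)$, the image is a full-dimensional subset of $\mathbb{S}^m_{\succeq 0}(G)$.

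I expect the only genuinely delicate step to be locating a single tuple $\gamma^0$ at which the Jacobian of $\phi_{\Delta_1}$ attains the maximal rank $m+|E(G)|$: at naive choices such as tuples with all edge-variables equal to $0$ the rank drops to $m$. Once the right $\gamma^0$ is in hand the computation collapses to the diagonal matrix above, so the rest is just index bookkeeping, and the other three assertions are routine.
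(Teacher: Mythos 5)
Your proposal is correct and follows essentially the same route as the paper: the same containment/semi-algebraicity/properness observations, and the same reduction of the full-dimensionality claim to the edge complex. The only difference is that where the paper simply asserts that the coordinate functions of $\phi_{\Delta_1}$ satisfy no algebraic relations, you certify this by exhibiting a point $\gamma^0$ at which the Jacobian has rank $m+|E(G)|$ -- a correct and slightly more explicit version of the same step.
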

\begin{proof}
  If $i\not= j$ and $\{i,j\}$ is not an edge of $G$, then no face of
  $\Delta$ contains both $i$ and $j$.  Hence, by
  (\ref{eq:phi-coords}), the image is a subset of $\mathbb{S}_{\succeq
    0}^m(G)$.  The image is semi-algebraic because $\phi_\Delta$ is a
  polynomial map, and it is closed because $\phi_\Delta$ is proper.
  
  If $\Delta'\subset\Delta$ is another simplicial complex with the
  same underlying graph then the image of $\phi_{\Delta'}$ is
  contained in the image of $\phi_{\Delta}$.  To show full dimension,
  we may thus assume that $\Delta$ is the complex whose facets are the
  edges of $G$.  Using the shorthand $\gamma_i=\gamma_{i,\{i\}}$ and
  $\gamma_{ij}=\gamma_{i,\{i,j\}}$ in this special case, the non-zero
  coordinates of $\phi_\Delta$ are
  \[
  \phi_\Delta(\gamma)_{ij}=
  \begin{cases}
    \gamma_{i}^2 + \sum_{k\in [m]: \{i,k\}\in\Delta}
      \gamma_{ik}^2 & \mbox{ if } i = j,\\
      \gamma_{ij}\gamma_{ji} & \mbox{ if } i\not= j.   
  \end{cases}
  \]
  It is evident that there are no algebraic relations among these
  coordinates and, thus, the image is full-dimensional.
\end{proof}

\begin{example}
  \label{ex:three-chain}
  Let $\Delta$ be the simplicial complex whose facets are the edges
  $\{1,2\}$ and $\{2,3\}$ of a three-chain.   We
  have that
  \begin{equation*}
    \Gamma(\gamma) =    \begin{pmatrix}
      \gamma_1 &0&0&\gamma_{12}&0\\
      0&\gamma_2&0&\gamma_{21}&\gamma_{23}\\
      0&0&\gamma_3&0&\gamma_{32}
    \end{pmatrix}
  \end{equation*}
  and
  \begin{equation*}
    \label{eq:cov-mx-2}
    \phi_\Delta(\gamma) = 
    \begin{pmatrix}
      \gamma_1^2+\gamma_{12}^2 & \gamma_{12}\gamma_{21} & 0\\
      \gamma_{12}\gamma_{21} & \gamma_{2}^2+\gamma_{21}^2+\gamma_{23}^2
      &\gamma_{23}\gamma_{32} \\ 
      0 & \gamma_{23}\gamma_{32} & \gamma_3^2+\gamma_{32}^2
    \end{pmatrix}.
  \end{equation*}
  It can be shown that $\phi_\Delta$ is a surjective map onto the
  entire cone $\SSS_{\succeq 0}^3(G)$, which here comprises the
  tridiagonal positive semidefinite matrices.  The surjectivity claim
  holds as a special case of Corollary~\ref{cor:surjective}.  \qed
\end{example} 

As we describe in more detail in Section~\ref{sec:statistical-models},
the motivation for considering the parametrization $\phi_\Delta$ comes
from statistics.  The graphical cones $\SSS_{\succeq 0}^m(G)$
correspond to statistical models for the multivariate normal
distribution; see \cite[\S2]{DrtonSJS2007} and references therein.
The parametrization $\phi_\Delta$ is particularly useful for tackling
statistical problems in covariance graph models, which treat the cone
$\SSS_{\succeq 0}(G)$ as a set of covariance matrices.  The
parametrization can be regarded as arising from constructions
involving hidden or latent variables
\cite{CoxWermuth1996,Richardson2002}.  This connection can be
exploited in particular for computation of maximum likelihood
estimates and construction of prior distributions for Bayesian
inference \cite{Barber:2008,Palomo2007}.  It also allows one to
simplify the study of algebraic properties of graphical models based
on mixed graphs; see \cite{Sullivant:trek}.

In Example~\ref{ex:three-chain}, the map $\phi_\Delta$ is surjective.
However, it is known that surjectivity need not always hold.  The
following example has been given in the literature.

\begin{example}
  \label{ex:three-cycle}
  Let $\Delta$ be the simplicial complex with facets $\{1,2\}$,
  $\{1,3\}$ and $\{2,3\}$, and the complete graph $K_3$ as
  underlying graph.  Now,
  \begin{equation*}
    \label{eq:cov-mx-3}
    \phi_{\Delta}(\gamma)=
    \begin{pmatrix}
      \gamma_1^2+\gamma_{12}^2+\gamma_{13}^2 & \gamma_{12}\gamma_{21}
      & \gamma_{13}\gamma_{31}\\ 
      \gamma_{12}\gamma_{21} &
      \gamma_{2}^2+\gamma_{21}^2+\gamma_{23}^2 &\gamma_{23}\gamma_{32}
      \\ 
      \gamma_{13}\gamma_{31} & \gamma_{23}\gamma_{32} &
      \gamma_3^2+\gamma_{31}^2+\gamma_{32}^2
    \end{pmatrix}.
  \end{equation*}
  Suppose we are given a positive definite matrix
  $\Sigma=(\sigma_{ij})$ in $\SSS^3_{\succeq 0}(K_3)=\SSS^3_{\succeq 0}$.
  Define the correlation matrix $R=(\rho_{ij})$ with entries
  $\rho_{ij}=\sigma_{ij}/\sqrt{\sigma_{ii}\sigma_{jj}}$.
  The matrix $R$ is obtained by multiplying $\Sigma$ from the left and
  right with the diagonal matrix that has the entries
  $1/\sqrt{\sigma_{ii}}$ on the diagonal.  It follows that $\Sigma$ is
  in the image of $\phi_\Delta$ if and only if $R$ is in the image.
  For $R$ to be in the image, however, it needs to hold that
  \begin{equation}
    \label{eq:tsr-rho-min}
    \min\left\{\rho_{12},\rho_{13},\rho_{23}\right\} \le \frac{1}{\sqrt{2}};
  \end{equation}  
  see \cite{SpirtesEtAl1998}.  Clearly, there are positive definite
  matrices in $\SSS^3_{\succeq 0}$ whose correlation matrices do not obey
  this condition. 
  
  Our Theorem~\ref{thm:semi-alg-cycle} applies to this example and
  gives a semi-algebraic description of the image of $\phi_\Delta$.
  This description reveals that a positive definite matrix is in the
  image if and only if its correlation matrix $R$ satisfies
  \begin{equation}
    \label{eq:det-R-12}
    1-\rho_{12}^2-\rho_{13}^2-\rho_{23}^2
    -2\rho_{12}\rho_{13}\rho_{23} \ge 0. 
  \end{equation}
  If $\rho_{12},\rho_{13},\rho_{23}> 1/2$, then the left hand side in
  (\ref{eq:det-R-12}) is smaller than $1-3/4-2/8 = 0$.  Hence, one may
  replace $1/\sqrt{2}$ by $1/2$ in the necessary condition in
  (\ref{eq:tsr-rho-min}), which can also be seen directly.  If $R =
  \phi_{\Delta}(\gamma)$ has diagonal entries $1$, then summing the
  diagonal entries gives
  $$
  3 = \sum_{i=1}^3 \gamma_i^2
  + \sum_{1\leq i < j \leq 3}(\gamma_{ij}^2 + \gamma_{ji}^2) =
  \sum_{i=1}^3 \gamma_i^2 + \sum_{1\leq i < j \leq 3}(\gamma_{ij} -
  \gamma_{ji})^2 + \sum_{1\leq i < j \leq 3}2 \gamma_{ij}
  \gamma_{ji},
  $$
  so we must have $\rho_{ij} = \gamma_{ij}\gamma_{ji} \leq 1/2$ for
  some $i,j$.
  \qed
\end{example}

This paper explores in detail the images of the maps $\phi_\Delta$, which
we denote by $\im(\phi_\Delta)$.  In Section~\ref{sec:convexity}, we show
that the image is always a convex cone and we describe its extreme rays.
In Section~\ref{sec:surjectivity}, we prove that surjectivity of the map
can only be achieved if $\Delta$ is the clique complex of a chordal (or
decomposable) graph.  Section~\ref{sec:sumbatrices-schur-complements}
collects results relevant for passing to submatrices and Schur complements.
In Section~\ref{sec:chordless-cycles}, we derive the semi-algebraic
description of the image when the underlying graph is a chordless cycle.
The connection to statistical models is reviewed in
Section~\ref{sec:statistical-models}.

\section{Convexity}
\label{sec:convexity}

The set $\mathbb{S}_{\succeq 0}^m$ of positive semidefinite $m\times m$
matrices forms a full-dimensional convex cone in the $m(m+1)/2$
dimensional vector space of $m \times m$ symmetric matrices.  A ray of
$\mathbb{S}_{\succeq 0}^m$ is the set of non-negative scalar multiples of
some non-zero matrix in $\mathbb{S}_{\succeq 0}^m$.  An extreme ray is a
ray that cannot be written as a positive linear combination of two
distinct rays.  The extreme rays of $\mathbb{S}_{\succeq 0}^m$ are given
by the positive semidefinite matrices of rank 1.  Hence,
$\mathbb{S}_{\succeq 0}^m$ is the convex hull of its rank 1 elements.

For $F\subseteq [m]$, let $\SSS_{\succeq 0}^m(F)$ be the convex cone
of positive semidefinite matrices that have zeros outside the $F\times
F$ submatrix.

\begin{theorem}
  \label{thm:convexity}
  For any simplicial complex $\Delta$ on $[m]$, the image of
  $\phi_\Delta$ is a convex cone.  The matrices on the extreme rays of
  the image are the rank one matrices that are in $\SSS_{\succeq
    0}^m(F)$ for some face $F \in \Delta$. In other words, $\im(\phi_{\Delta}) = \sum_{F \in \Delta} \SSS_{\succeq 0}^m(F)$.

\end{theorem}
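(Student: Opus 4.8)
The plan is to establish the identity $\im(\phi_\Delta) = \sum_{F \in \Delta} \SSS_{\succeq 0}^m(F)$ first, and then read off convexity and the description of the extreme rays from it. For the key identity I would argue by double inclusion. For ``$\subseteq$'', observe that the factorization $\phi_\Delta(\gamma) = \Gamma(\gamma)\Gamma(\gamma)^T$ expresses the image matrix as a sum of outer products of the columns of $\Gamma(\gamma)$: writing $\Gamma(\gamma) = (v_F : F \in \Delta)$ where $v_F \in \mathbb{R}^m$ is the column indexed by $F$, we have $\phi_\Delta(\gamma) = \sum_{F \in \Delta} v_F v_F^T$. By \eqref{eq:def-Gamma-gamma}, the vector $v_F$ is supported on $F$, so $v_F v_F^T \in \SSS_{\succeq 0}^m(F)$, and hence $\phi_\Delta(\gamma) \in \sum_{F \in \Delta} \SSS_{\succeq 0}^m(F)$.

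For the reverse inclusion ``$\supseteq$'', I first note that each summand $\SSS_{\succeq 0}^m(F)$ is contained in $\im(\phi_\Delta)$. Indeed, a matrix $A \in \SSS_{\succeq 0}^m(F)$ is, after restricting to the $F \times F$ block, a positive semidefinite matrix of some rank $r \le |F|$, so it can be written as $A = \sum_{k=1}^{r} w_k w_k^T$ with each $w_k \in \mathbb{R}^m$ supported on $F$ (spectral decomposition of the $F\times F$ block, padded with zeros). Since $\Delta$ is a simplicial complex and $F \in \Delta$, we may choose $r$ distinct faces; but more simply, $|F|$ copies of $F$ suffice only if we are allowed repeated faces, which we are not. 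To handle this cleanly I would instead use that $F$ has $|F|$ vertices, hence $2^F$ has at least $|F|$ faces that are subsets of $F$ of the form... — rather, the cleanest route is: each rank-one matrix $w w^T$ with $w$ supported on $F$ lies in $\im(\phi_\Delta)$, taking $\gamma$ with $\gamma_{i,F} = w_i$ for $i \in F$ and all other coordinates zero, so that $\phi_\Delta(\gamma) = w w^T$. Then I invoke that $\im(\phi_\Delta)$ is closed under addition: given $\gamma, \gamma'$, the images $\phi_\Delta(\gamma)$ and $\phi_\Delta(\gamma')$ can be added by stacking the factor matrices, i.e. $\phi_\Delta(\gamma) + \phi_\Delta(\gamma') = [\,\Gamma(\gamma)\ \Gamma(\gamma')\,][\,\Gamma(\gamma)\ \Gamma(\gamma')\,]^T$, but this stacked matrix has column set $\Delta \sqcup \Delta$, not $\Delta$. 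The resolution is that for a positive semidefinite $F\times F$ matrix one never needs more than one column per face when one is allowed to scale, because what we actually need is closure under sums of matrices each supported on a single face: $\SSS_{\succeq 0}^m(F)$ is itself a convex cone (being $\SSS_{\succeq 0}^{|F|}$ embedded in a coordinate subspace), so it is closed under addition, and a single $\gamma$-block for the face $F$ already parametrizes a rank-one matrix; summing over the distinct faces $F \in \Delta$ — one block per face — realizes an arbitrary element of $\sum_{F\in\Delta} \SSS_{\succeq 0}^m(F)$ provided each summand is rank one. For higher-rank summands I use that $\SSS_{\succeq 0}^m(F) = \sum_{G \subseteq F} \SSS_{\succeq 0}^m(G)$ is spanned by rank-one pieces and that all subsets $G$ of $F$ also lie in $\Delta$; the rank-one pieces for distinct such $G$ use distinct $\gamma$-blocks. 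This shows $\sum_{F\in\Delta}\SSS_{\succeq 0}^m(F) \subseteq \im(\phi_\Delta)$, completing the identity.

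Once the identity $\im(\phi_\Delta) = \sum_{F \in \Delta} \SSS_{\succeq 0}^m(F)$ is in hand, convexity is immediate since a sum (Minkowski sum) of convex cones is a convex cone. For the extreme rays, I argue that every element of the image is a sum of rank-one matrices each lying in $\SSS_{\succeq 0}^m(F)$ for some $F \in \Delta$ (from the ``$\subseteq$'' direction via spectral decomposition of each column's outer product — each $v_F v_F^T$ is already rank one), so the image is the convex hull of the set $\mathcal{R} = \bigcup_{F \in \Delta}\{\text{rank-one matrices in } \SSS_{\succeq 0}^m(F)\}$; hence every extreme ray is spanned by an element of $\mathcal{R}$. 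Conversely, each such rank-one matrix $w w^T$ (with $w$ supported on $F \in \Delta$) is extreme in $\SSS_{\succeq 0}^m$ and a fortiori extreme in the smaller cone $\im(\phi_\Delta)$: if $ww^T = A + B$ with $A, B \in \im(\phi_\Delta) \subseteq \SSS_{\succeq 0}^m$, then by extremality in $\SSS_{\succeq 0}^m$ both $A$ and $B$ are non-negative multiples of $ww^T$. I expect the main obstacle to be the bookkeeping in the ``$\supseteq$'' direction, specifically making rigorous that one $\gamma$-block per face is enough to generate each summand $\SSS_{\succeq 0}^m(F)$ — the honest fix is to expand each $\SSS_{\succeq 0}^m(F)$ over its rank-one generators supported on subsets $G \subseteq F$ (all of which are faces of $\Delta$, being a simplicial complex), so that different rank-one contributions are carried by different coordinates $\gamma_{\cdot, G}$ of the single vector $\gamma$; no face is used twice, and the stacking issue never arises.
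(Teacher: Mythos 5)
Your overall architecture (prove $\im(\phi_\Delta)=\sum_{F\in\Delta}\SSS_{\succeq 0}^m(F)$ by double inclusion and then read off convexity and the extreme rays) matches the paper's, and your ``$\subseteq$'' inclusion and your two-sided extreme-ray argument are correct. The gap is in ``$\supseteq$'', which is the substance of the theorem. Two things go wrong. First, you never supply a mechanism for writing a positive semidefinite matrix supported on $F\times F$ as a sum of rank-one matrices supported on \emph{distinct} subfaces of $F$: the spectral decomposition you invoke produces rank-one pieces that generically all have full support $F$, so it does not help; what is needed is the Cholesky (triangular) factorization, whose columns have a strictly decreasing chain of supports $F\supsetneq G_2\supsetneq\cdots$, all of which are faces because $\Delta$ is a simplicial complex. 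Second, and more seriously, even granting such a decomposition of each summand $A_F$ individually, your claim that ``no face is used twice'' is false across different summands: distinct faces $F\neq F'$ can share a subface $G\subseteq F\cap F'$, and both decompositions may place a rank-one piece on $G$ (for the clique complex of $K_3$, the Cholesky factors of a rank-three matrix on $\{1,2,3\}$ and of a rank-two matrix on $\{2,3\}$ both produce a column supported on $\{2,3\}$). Resolving these collisions requires merging the colliding pieces into a single positive semidefinite matrix on $G$, re-factoring it, and pushing the excess down to still smaller faces; one must show this cascade terminates with at most one column per face.

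That cascade is exactly what the paper organizes as an induction on the maximal cardinality of a face: for each maximal face $F$, the total contribution supported on $F\times F$ is Cholesky-factored so that only its first column can have support equal to $F$, the remaining columns have supports that are proper subfaces and are absorbed into $\im(\phi_{\Delta\setminus\{F\}})$, and the inductive hypothesis handles the rest. Your proposal correctly identifies the obstacle (one $\gamma$-block per face) but does not carry out this step, so closure under addition --- and with it the identity $\im(\phi_\Delta)=\sum_{F\in\Delta}\SSS_{\succeq 0}^m(F)$ --- is not actually established.
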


\begin{proof}
  Elements of the image of the map $\phi_\Delta$ are of the form
  \[
  \sum_{F \in \Delta} \Gamma(\gamma)_F \Gamma(\gamma)_F^T,
  \]
  where $\Gamma(\gamma)_F$ is the column of $\Gamma(\gamma)$
  corresponding to face $F$.  This column can be any vector in
  $\mathbb{R}^m$ that has $i$-th entry zero for each $i \notin F$.  It
  is clear that the image of $\phi_\Delta$ is closed under positive
  scaling.  We will show that it is closed under addition, by
  induction on the maximal cardinality of a face in $\Delta$.
  
  If all faces have size 1 then the image of $\phi_\Delta$ consists of
  all positive semidefinite diagonal matrices and is convex.  Let $F$
  be a facet of $\Delta$ and suppose it has cardinality at least 2.
  Consider the matrix
  \[
  \Sigma = \Gamma(\gamma)_F  \Gamma(\gamma)_F^T +  \Gamma(\gamma')_F
  \Gamma(\gamma')_F^T 
  \]
  and its Cholesky decomposition $\Sigma = L L^T$, where $L$ is a
  lower triangular matrix.  Since $\Sigma\in\SSS_{\succeq 0}^m(F)$,
  each column of the Cholesky factor $L$ has support in $F$.
  In fact only the first column of $L$ may have support equal to $F$; denote
  this column by $L_1$.  All other columns of $L$ have support
  strictly smaller than $F$.  These smaller supports correspond to
  subfaces of $F$, so they are in $\Delta$.  Hence, $\Sigma$ is the
  sum of $L_1 L_1^T$ and an element in the image of $\phi_{\Delta
    \backslash \{F\}}$.  (Removing a facet leaves us with
  another simplicial complex.)  Repeating this process for all other
  faces of maximal cardinality in $\Delta$ and using the inductive
  hypothesis, we see that the image of $\phi_\Delta$ is closed under
  addition.
  
  Suppose a non-zero matrix $\Sigma$ is on an extreme ray of the
  convex cone $\im(\phi_\Delta)$.  Then $\Sigma = \Sigma_1 + \Sigma_2$
  for some non-zero and distinct matrices $\Sigma_1,\Sigma_2$ in the
  same cone implies that both $\Sigma_1$ and $\Sigma_2$ are scalar
  multiples of $\Sigma$.  From the definition, any element in the
  image of $\phi_\Delta$ is a sum of rank one matrices in it, so only
  rank one matrices can be on the extreme rays.  Moreover, any rank
  one positive semidefinite matrix is on an extreme ray of
  $\SSS_{\succeq 0}^m$, so it is also on an extreme ray of the convex
  subcone $\im(\phi_\Delta)$ that contains it.  A rank one matrix in
  $\im(\phi_\Delta)$ is of the form $v v^T$ for some vector $v \in
  \mathbb{R}^m$ whose support $F$ is a face of $\Delta$.  Hence,
  $vv^T\in\SSS_{\succeq 0}^m(F)$.
\end{proof}

A clique in an undirected graph $G$ with vertex set $[m]$ is a subset
$F\subseteq [m]$ such that for any pair of distinct vertices $i, j\in
F$,  $\{i,j\}$ is in $E(G)$.  The set of all cliques in $G$ forms a simplicial complex on $[m]$ and is called the {\em clique complex} of $G$.

\begin{corollary}
  Let $\Delta$ be a simplicial complex on $[m]$ with underlying graph
  $G$.  Then the extreme rays of the image of $\phi_\Delta$ consist of
  all rank one matrices in $\mathbb{S}_{\succeq 0}^m(G)$ if and only if
  $\Delta$ consists of all the cliques in $G$.
\end{corollary}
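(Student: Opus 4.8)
The plan is to read off the corollary from Theorem~\ref{thm:convexity}. That theorem says the matrices on the extreme rays of $\im(\phi_\Delta)$ are exactly the rank one matrices $vv^T$ lying in $\SSS_{\succeq 0}^m(F)$ for some $F\in\Delta$; equivalently, those $vv^T$ with $\mathrm{supp}(v)\subseteq F$ for some face $F$, which, because $\Delta$ is a simplicial complex, is the same as $\mathrm{supp}(v)\in\Delta$. So the matrices on the extreme rays of $\im(\phi_\Delta)$ are precisely the rank one $vv^T$ with $\mathrm{supp}(v)\in\Delta$. On the other side, $vv^T\in\SSS_{\succeq 0}^m(G)$ means $v_iv_j=0$ whenever $\{i,j\}$ is a non-edge of $G$, i.e.\ $\mathrm{supp}(v)$ contains no non-edge, i.e.\ $\mathrm{supp}(v)$ is a clique of $G$. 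Hence the corollary reduces to the statement that a subset of $[m]$ is a face of $\Delta$ if and only if it is a clique of $G$, which is exactly the assertion that $\Delta$ is the clique complex of $G$.

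To turn this into a proof I would spell out the two directions. If $\Delta$ is the clique complex of $G$, its faces are the cliques of $G$, and the equivalences above immediately give that the extreme-ray matrices of $\im(\phi_\Delta)$ are exactly the rank one matrices in $\SSS_{\succeq 0}^m(G)$. Conversely, assume the latter holds. For any face $F\in\Delta$, choosing $v$ with $\mathrm{supp}(v)=F$ shows $vv^T$ is on an extreme ray, hence in $\SSS_{\succeq 0}^m(G)$, so $F$ is a clique; thus every face of $\Delta$ is a clique of $G$. For any clique $F$ of $G$, choosing $v$ with $\mathrm{supp}(v)=F$ gives a rank one matrix $vv^T\in\SSS_{\succeq 0}^m(G)$, which is therefore on an extreme ray, so by Theorem~\ref{thm:convexity} $F=\mathrm{supp}(v)$ is a face of $\Delta$; thus every clique of $G$ is a face of $\Delta$. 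Hence $\Delta$ coincides with the clique complex.

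The whole argument is bookkeeping around Theorem~\ref{thm:convexity}, so I do not expect a genuine obstacle. The only points needing a little care are the two translations used above — ``$vv^T\in\SSS_{\succeq 0}^m(G)$ iff $\mathrm{supp}(v)$ is a clique of $G$'' and ``$\mathrm{supp}(v)$ is contained in some face of the simplicial complex $\Delta$ iff $\mathrm{supp}(v)$ is itself a face of $\Delta$'' — together with the trivial fact that every nonempty subset of $[m]$ arises as the support of some vector in $\mathbb{R}^m$.
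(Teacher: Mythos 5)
Your argument is correct and is exactly the deduction from Theorem~\ref{thm:convexity} that the paper intends (the paper states this corollary without proof, as an immediate consequence of the theorem). The two translations you isolate --- $vv^T\in\SSS_{\succeq 0}^m(G)$ iff $\mathrm{supp}(v)$ is a clique, and $\mathrm{supp}(v)\subseteq F$ for some $F\in\Delta$ iff $\mathrm{supp}(v)\in\Delta$ --- are precisely the right bookkeeping, and both directions are handled cleanly.
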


\section{Surjectivity}
\label{sec:surjectivity}

The maximal rank of a matrix lying on an extreme ray of
$\mathbb{S}_{\succeq 0}^m(G)$ is called the {\em sparsity order} of
the graph $G$ and denoted $\ord(G)$.  
A subgraph $H$ of $G$ is called an {\em induced subgraph} if for all pairs of vertices $i,j$ in $H$, $\{i,j\}\in E(H) \iff \{i,j\} \in E(G)$.  A graph is called {\em chordal} if it does not contain any chordless cycle of size more than three as an induced subgraph.
The following results are known
in the literature \cite{AHMR, HPR, Laurent2001}.

\begin{theorem}
  For a graph $G$ with $m$ vertices,
  \begin{itemize}
  \item[(i)] $1 \leq \ord(G) \leq m-2$,
  \item[(ii)] $\ord(G) = 1$ if and only if $G$ is chordal,
  \item[(iii)] $\ord(G) = m-2$ if and only if $m \leq 3$ or  $G$ is a chordless cycle, and
  \item[(iv)] if $H$ is an induced subgraph of $G$, then $\ord(H) \leq \ord(G)$.
  \end{itemize}
\end{theorem}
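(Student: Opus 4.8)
The plan is to base everything on one description of the extreme rays of $\SSS_{\succeq 0}^m(G)$. Given $\Sigma\in\SSS_{\succeq 0}^m(G)$ of rank $r$, write $\Sigma=BB^T$ with $B\in\mathbb{R}^{m\times r}$ of full column rank and let $b_1,\dots,b_m\in\mathbb{R}^r$ be its rows. The smallest face of $\SSS_{\succeq 0}^m$ containing $\Sigma$ consists of the matrices $BSB^T$ with $S\in\SSS_{\succeq 0}^r$, so the smallest face of $\SSS_{\succeq 0}^m(G)=\SSS_{\succeq 0}^m\cap\SSS^m(G)$ containing $\Sigma$ is, via the injective map $S\mapsto BSB^T$, isomorphic to the cone $L_\Sigma\cap\SSS_{\succeq 0}^r$, where
\[
L_\Sigma\ :=\ \bigl\{\,S\in\SSS^r\ :\ b_i^TSb_j=0 \text{ whenever } i\neq j \text{ and } \{i,j\}\notin E(G)\,\bigr\}.
\]
Since the positive definite matrix $I_r$ lies in the relative interior of $L_\Sigma\cap\SSS_{\succeq 0}^r$, that face is a ray exactly when $\dim L_\Sigma=1$, i.e.\ when $L_\Sigma=\mathbb{R}\,I_r$. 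Counting the $\binom{m}{2}-|E(G)|$ linear conditions defining $L_\Sigma$ in $\SSS^r$ then gives the crude necessary inequality $\tfrac{1}{2} r(r+1)\le\binom{m}{2}-|E(G)|+1$ for every $\Sigma$ on an extreme ray.

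Parts (i) and (iv) follow quickly. In (i) the lower bound holds because $e_1e_1^T$ is a rank-one element of $\SSS_{\succeq 0}^m(G)$ and hence lies on an extreme ray of $\SSS_{\succeq 0}^m$, a fortiori of the subcone. For the upper bound assume $m\ge3$ (for $m\le2$ every graph is chordal with $\ord=1$): the crude inequality with $r=m$ forces $m\le1$, and with $r=m-1$ it forces $|E(G)|\le1$; but a graph with at most one edge has $\SSS_{\succeq 0}^m(G)$ equal to an orthogonal direct sum of copies of $\SSS_{\succeq 0}^1$ and at most one $\SSS_{\succeq 0}^2$, whose extreme rays are all of rank one, contradicting $m-1\ge2$. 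Hence no extreme ray has rank $\ge m-1$ and $\ord(G)\le m-2$. For (iv), let $H$ be the induced subgraph of $G$ on $W\subseteq[m]$, take an extreme ray $\Sigma_H$ of $\SSS_{\succeq 0}^{|W|}(H)$ of rank $\ord(H)$, and form $\widetilde\Sigma\in\SSS^m$ with $\Sigma_H$ as its $W\times W$ block and zeros elsewhere. Since $\widetilde\Sigma_{ii}=0$ for $i\notin W$, any $Y,Z\in\SSS_{\succeq 0}^m(G)$ with $Y+Z=\widetilde\Sigma$ vanish in every row and column indexed outside $W$, and their $W\times W$ blocks lie in $\SSS_{\succeq 0}^{|W|}(H)$ — here it is essential that $H$ is \emph{induced}, so that non-edges of $H$ are non-edges of $G$ — hence are scalar multiples of $\Sigma_H$; so $\widetilde\Sigma$ spans an extreme ray and $\ord(G)\ge\ord(H)$.

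For (ii): when $G$ is chordal, the classical clique decomposition of positive semidefinite matrices with chordal pattern gives $\SSS_{\succeq 0}^m(G)=\sum_F\SSS_{\succeq 0}^m(F)$, summed over the cliques $F$ of $G$; since every extreme ray of a Minkowski sum of convex cones is an extreme ray of one of the summands, and each $\SSS_{\succeq 0}^m(F)\cong\SSS_{\succeq 0}^{|F|}$ has only rank-one extreme rays, $\ord(G)=1$. If $G$ is not chordal it has a chordless cycle of length $k\ge4$ as an induced subgraph, so by (iv) and the construction below for (iii), $\ord(G)\ge k-2\ge2$. For (iii), the case $m\le3$ is immediate, every graph on at most three vertices being chordal with $\ord=1=m-2$. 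For the chordless $m$-cycle $C_m$ with $m\ge4$ there are $\binom{m}{2}-m=m(m-3)/2$ pairs of non-adjacent vertices and $\tfrac{1}{2}(m-2)(m-1)-\tfrac{1}{2} m(m-3)=1$, so it suffices to produce vectors $b_1,\dots,b_m\in\mathbb{R}^{m-2}$ spanning $\mathbb{R}^{m-2}$, with $b_i^Tb_j=0$ for all non-adjacent $i,j$, and such that the $m(m-3)/2$ linear conditions they impose on $S\in\SSS^{m-2}$ are linearly independent; then $L_\Sigma=\mathbb{R}\,I_{m-2}$ for $\Sigma=BB^T$, a rank-$(m-2)$ extreme ray, and with (i) this gives $\ord(C_m)=m-2$.

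It remains to prove the converse in (iii): if $\ord(G)=m-2$ with $m\ge4$ then $G=C_m$. The crude inequality with $r=m-2$ forces $|E(G)|\le m$; if $G$ were disconnected, applying (i) to each component would give $\ord(G)\le m-3$, so $G$ is connected and $|E(G)|\in\{m-1,m\}$, the value $m-1$ making $G$ a tree and hence chordal with $\ord(G)=1\ne m-2$. So $|E(G)|=m$ and $G$ is unicyclic; deleting leaves one at a time — each being a simplicial vertex, and deletion of a simplicial vertex leaving $\ord$ unchanged — reduces $G$ to its unique cycle $C_\ell$, whence $\ord(G)=\ell-2$ and therefore $\ell=m$, i.e.\ $G=C_m$. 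The two genuinely technical ingredients, both carried out in \cite{AHMR,Laurent2001}, are the explicit rank-$(m-2)$ configuration for the cycle together with the verification that its conditions are independent, and the invariance of $\ord$ under deletion of a simplicial vertex; the latter rests on a Schur-complement argument and is the step where I expect the main difficulty to lie.
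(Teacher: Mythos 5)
The paper does not actually prove this theorem: it is quoted as a known result with citations to \cite{AHMR, HPR, Laurent2001}, so there is no in-paper argument to compare yours against. Judged on its own terms, your framework is the standard one from exactly those references and is set up correctly: the minimal face of $\SSS_{\succeq 0}^m(G)$ containing $\Sigma=BB^T$ is isomorphic to $L_\Sigma\cap\SSS_{\succeq 0}^r$, extremality is equivalent to $L_\Sigma=\mathbb{R}I_r$, and the dimension count $\tfrac12 r(r+1)\le\binom{m}{2}-|E(G)|+1$ follows. Your proofs of (i) and (iv) are complete and correct (the embedding argument for (iv), using that $H$ is induced and that a PSD matrix with a zero diagonal entry has the corresponding row and column zero, is exactly right). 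For the forward direction of (ii), invoking the clique decomposition $\SSS_{\succeq 0}^m(G)=\sum_F\SSS_{\succeq 0}^m(F)$ for chordal $G$ is acceptable; the paper itself sketches this via perfect elimination orderings in the remark after Corollary 3.3.

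The genuine gaps are the two ingredients you explicitly defer to the literature, and they are not peripheral: (a) the existence of a rank-$(m-2)$ extreme ray of $\SSS_{\succeq 0}^m(C_m)$, i.e.\ an explicit $B$ with rows $b_1,\dots,b_m\in\mathbb{R}^{m-2}$ whose $m(m-3)/2$ orthogonality conditions $b_i^TSb_j=0$ are linearly independent on $\SSS^{m-2}$; and (b) the invariance of $\ord$ under deletion of a simplicial vertex (equivalently, $\ord$ of a clique sum is the maximum over the summands). Without (a) you have neither the forward direction of (iii) nor the converse of (ii), since your proof that a non-chordal graph has $\ord\ge 2$ routes through $\ord(C_k)\ge k-2$; without (b) the converse of (iii) collapses, since reducing a unicyclic graph to its cycle is the whole argument there. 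So as written this is a correct reduction of the theorem to two nontrivial lemmas from \cite{AHMR,Laurent2001} rather than a self-contained proof — which, to be fair, still exceeds what the paper itself supplies. If you want to close the gaps, (a) can be done by exhibiting a concrete configuration (for $m=4$, e.g.\ $b_1=(1,0)$, $b_3=(0,1)$, $b_2=(1,1)$, $b_4=(1,-1)$ gives $L_\Sigma=\{S:\,s_{12}=0,\ s_{11}=s_{22}\}=\mathbb{R}I_2$) and an induction for general $m$; (b) is the Schur-complement argument you anticipate, and you are right that it is the hardest step.
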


These results readily allow one to characterize when the
parametrization $\phi_\Delta$ fills all of the graphical cone
$\SSS_{\succeq 0}^m(G)$.

\begin{corollary}
  \label{cor:surjective}
  Let $\Delta$ be a simplicial complex and $G$ a graph on $[m]$.  The map
  $\phi_\Delta$ is surjective onto $\SSS_{\succeq 0}^m(G)$ if
  and only if $G$ is chordal and $\Delta$ is its clique complex.
\end{corollary}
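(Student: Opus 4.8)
The plan is to derive Corollary~\ref{cor:surjective} by combining Theorem~\ref{thm:convexity} with the theorem on sparsity orders stated just above it. By Theorem~\ref{thm:convexity}, $\im(\phi_\Delta) = \sum_{F\in\Delta}\SSS_{\succeq 0}^m(F)$, and its extreme rays are exactly the rank-one matrices supported on a face of $\Delta$. So the question of whether $\phi_\Delta$ is surjective onto $\SSS_{\succeq 0}^m(G)$ becomes a question about the extreme rays of the target cone $\SSS_{\succeq 0}^m(G)$: since both $\im(\phi_\Delta)$ and $\SSS_{\succeq 0}^m(G)$ are closed convex cones (the former by Lemma~\ref{lem:full-dim} and Theorem~\ref{thm:convexity}), equality holds if and only if they have the same extreme rays.

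For the ``if'' direction, suppose $G$ is chordal and $\Delta$ is its clique complex. By part (ii) of the sparsity-order theorem, $\ord(G)=1$, so every matrix on an extreme ray of $\SSS_{\succeq 0}^m(G)$ has rank one. Any such rank-one matrix $vv^T$ lies in $\SSS^m(G)$, which forces the support of $v$ to be a clique of $G$ (if $i,j$ are both in the support and $\{i,j\}\notin E(G)$, then the $(i,j)$ entry $v_iv_j$ would be nonzero), hence a face of $\Delta$. Thus every extreme ray of $\SSS_{\succeq 0}^m(G)$ is an extreme ray of $\im(\phi_\Delta)$, and since the reverse inclusion $\im(\phi_\Delta)\subseteq\SSS_{\succeq 0}^m(G)$ is given by Lemma~\ref{lem:full-dim}, the two cones coincide. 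For the ``only if'' direction, suppose $\phi_\Delta$ is surjective. Then every extreme ray of $\SSS_{\succeq 0}^m(G)$ must be an extreme ray of $\im(\phi_\Delta)$, hence a rank-one matrix supported on a face of $\Delta$; in particular $\ord(G)=1$, so by part (ii) $G$ is chordal. It remains to see that $\Delta$ must be the full clique complex: the Corollary to Theorem~\ref{thm:convexity} states that the extreme rays of $\im(\phi_\Delta)$ are \emph{all} rank-one matrices in $\SSS_{\succeq 0}^m(G)$ precisely when $\Delta$ is the clique complex of $G$, and surjectivity forces exactly this condition on the extreme rays. (If $\Delta$ were a proper subcomplex with the same underlying graph, some clique $F$ of $G$ would not be a face of $\Delta$, and a generic rank-one matrix supported on $F$—which lies in $\SSS_{\succeq 0}^m(G)$ and is extreme there—would not be in $\im(\phi_\Delta)$.)

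The only real content beyond bookkeeping is the observation that a rank-one matrix in $\SSS^m(G)$ has clique support, together with the invocation of $\ord(G)=1\iff G$ chordal; everything else is assembling the characterization of extreme rays from Theorem~\ref{thm:convexity} and its corollary. I expect no genuine obstacle here—the statement is essentially a repackaging of the preceding results—so the main care needed is just to make sure the ``only if'' argument correctly rules out proper subcomplexes, which the Corollary to Theorem~\ref{thm:convexity} handles cleanly.
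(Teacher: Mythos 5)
Your proof is correct and takes essentially the same route as the paper: both rest on the extreme-ray description from Theorem~\ref{thm:convexity}, the equivalence $\ord(G)=1\iff G$ chordal, and a rank-one matrix supported on a missing clique to rule out proper subcomplexes. The only point the paper makes explicit that you elide is that surjectivity also forces every face of $\Delta$ to be a clique of $G$ (so that $\im(\phi_\Delta)\subseteq\SSS^m(G)$ and Lemma~\ref{lem:full-dim} can be invoked with $G=G(\Delta)$), but this is immediate and does not affect the argument.
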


\begin{proof}
  {\em (Sufficiency)} If $\Delta$ contains all cliques in $G$, then
  $\im(\phi_\Delta)$ contains all rank one matrices in $\SSS_{\succeq
    0}^m(G)$.  If $G$ is chordal, then its sparsity order is one, so
  $\SSS_{\succeq 0}^m(G)$ is generated by rank one matrices in it.
  Hence, $\im(\phi_\Delta) = \SSS_{\succeq 0}^m(G)$ and $\phi_\Delta$
  is surjective.
  
  {\em (Necessity)} First note that the image of $\phi_\Delta$ is a
  subset of $\SSS_{\succeq 0}^m(G)$ only if all sets in $\Delta$ are
  cliques of $G$.
  
  Let $\Delta$ be the clique complex of $G$.  If $G$ is not
  chordal, then there is an induced subgraph that is a chordless cycle
  of size at least 4.  So $\ord(G) \geq 2$, and there is an extreme
  ray of $\SSS_{\succeq 0}^m(G)$ containing matrices of rank at least
  two.  This ray is not in the convex cone $\im(\phi_\Delta)$, so
  $\phi_\Delta$ is not surjective.  It follows that $\phi_{\Delta'}$
  is not surjective for any (arbitrary) subset $\Delta'$ of $\Delta$.
  
  Suppose $\Delta$ does not contain a clique $F$ in
  $G$.  Let $v \in \mathbb{R}^m$ be a vector with support $F$.  Then
  $v v^T$ is a rank one element of $\SSS_{\succeq 0}^m(G)$.  It lies
  in an extreme ray of $\SSS_{\succeq 0}^m(G)$ because it lies in an
  extreme ray of the larger cone $\SSS_{\succeq 0}^m$.  Hence, it
  cannot be written as a sum of other elements in $\SSS_{\succeq
    0}^m(G)$, so it is not in $\im(\phi_\Delta)$, and $\phi_\Delta$ is
  not surjective.
\end{proof}

\begin{remark}
  The sufficiency of the condition in Corollary~\ref{cor:surjective}
  can also be proved by using the Cholesky decomposition to compute a
  point in the fiber $\phi_\Delta^{-1}(\Sigma)$ of a matrix
  $\Sigma\in\SSS_{\succeq 0}^m(G)$.  The vertices of a chordal graph
  $G$ can be brought into a perfect elimination ordering, which
  ensures sparsity of the lower-triangular Cholesky factor; see for
  example \cite[Thm.~2.4]{Paulsen1989}.  Suppose the original vertices
  $1,\dots,m$ are already in such an order.  Then $\Sigma=LL^T$ for a
  lower-triangular matrix $L=(l_{ij})$ with $l_{ij}=0$ when $i\not= j$
  and $\{i,j\}$ is not an edge of $G$.  The support of each column of
  $L$ is thus a clique in $G$.  It follows that
  $\Sigma\in\im(\phi_\Delta)$.
  
  The necessity of the chordality condition in
  Corollary~\ref{cor:surjective} also follows from our semi-algebraic
  characterization of $\im(\phi_\Delta)$ when $\Delta$ is the clique
  complex of a chordless cycle; see Section~\ref{sec:chordless-cycles}
  that also gives an example of a matrix not in the image.  \qed
\end{remark}

In the statistical literature, the parametrization $\phi_\Delta$ is
most commonly considered for a simplicial complex $\Delta$ given by
the edges of a graph.  The parametrization for such an edge complex is
surjective only for chordal graphs whose cliques are of cardinality at
most two.  This means that there may not be any cycles.

\begin{corollary}
  \label{cor:forest}
  The edge complex $\Delta$ of a graph $G$ yields a surjective
  parametrization $\phi_\Delta$ of $\mathbb{S}_{\succeq 0}^m(G)$ if
  and only if $G$ is a forest (has no cycles).
\end{corollary}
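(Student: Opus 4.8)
The plan is to reduce everything to Corollary~\ref{cor:surjective}, which already characterizes surjectivity of $\phi_\Delta$ as the conjunction of two conditions: the underlying graph must be chordal, and $\Delta$ must be the clique complex of that graph. Since the edge complex $\Delta$ of $G$ has ground set $[m]$ and $2$-element faces exactly the edges of $G$, its underlying graph is $G(\Delta) = G$ itself. So it suffices to determine precisely when $G$ is chordal \emph{and} the edge complex of $G$ coincides with the clique complex of $G$.

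First I would observe that the edge complex of $G$---whose faces are the empty set, the singletons, and the edges---coincides with the clique complex of $G$ if and only if $G$ contains no triangle. The two complexes always agree in dimensions $0$ and $1$, so they agree if and only if the clique complex has no face of cardinality $\ge 3$, i.e., $G$ has no clique on three vertices. Thus the surjectivity of $\phi_\Delta$ is equivalent to $G$ being chordal and triangle-free.

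It then remains to prove the purely graph-theoretic equivalence that $G$ is chordal and triangle-free if and only if $G$ is a forest. The forward implication is the one with content. Suppose $G$ is chordal and triangle-free but contains a cycle, and choose a cycle $C$ of minimum length. If $|C| = 3$ then $C$ is a triangle, contradicting triangle-freeness. If $|C| \ge 4$, then $C$ can have no chord, since a chord would split $C$ into two strictly shorter cycles, contradicting the minimality of $|C|$; hence $C$ is an induced chordless cycle of length at least four, contradicting chordality. Therefore $G$ has no cycle and is a forest. Conversely, a forest contains no cycle whatsoever, so it is trivially chordal and trivially triangle-free. Combining the two reductions, $\phi_\Delta$ is surjective onto $\SSS_{\succeq 0}^m(G)$ precisely when $G$ is a forest.

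I expect no genuine obstacle here: the statement is essentially a bookkeeping corollary of Corollary~\ref{cor:surjective}. The only non-routine point is the standard observation that a shortest cycle is induced (has no chord), and the main care needed is simply to verify that the edge complex and the clique complex can differ only through faces of cardinality at least three, which pins the discrepancy to the presence of a triangle.
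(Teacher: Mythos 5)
Your proof is correct and follows essentially the same route the paper intends: reduce to Corollary~\ref{cor:surjective}, note that the edge complex equals the clique complex exactly when $G$ is triangle-free, and observe that chordal plus triangle-free is equivalent to being a forest (the paper compresses this into the sentence preceding the corollary about cliques of cardinality at most two). Your extra detail that a shortest cycle is necessarily chordless is the right justification for the graph-theoretic step the paper leaves implicit.
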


\section{Submatrices and Schur complements}
\label{sec:sumbatrices-schur-complements}

For a simplicial complex $\Delta$ on $[m]$ and a subset $A\subseteq
[m]$, define the induced subcomplex $\Delta_A = \{ F\in\Delta\::\:
F\subseteq A\}$.

\begin{lemma}
  \label{lem:submatrix}
  Let $\Delta$ be a simplicial complex on $[m]$.  If $\Sigma$ is a
  matrix in the image of $\phi_\Delta$, then all proper principal
  submatrices $\Sigma_{A,A}$, $A \subset [m]$, are in the image of the
  respective induced subcomplex $\phi_{\Delta_A}$.
\end{lemma}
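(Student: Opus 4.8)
The plan is to exploit the explicit coordinate description of $\phi_\Delta$ in (\ref{eq:phi-coords}) together with the characterization of $\im(\phi_\Delta)$ as a sum of cones provided by Theorem~\ref{thm:convexity}. Concretely, suppose $\Sigma = \phi_\Delta(\gamma)$ for some $\gamma = (\gamma_{i,F} : F\in\Delta,\ i\in F)$, and fix a proper subset $A\subset[m]$. The principal submatrix $\Sigma_{A,A}$ has entries $(\Sigma_{A,A})_{ij} = \sum_{F\in\Delta\,:\,i,j\in F}\gamma_{i,F}\gamma_{j,F}$ for $i,j\in A$. The idea is to discard from this sum every face $F$ that is not entirely contained in $A$: if $i,j\in A$ but $F\not\subseteq A$, then $F$ still contributes $\gamma_{i,F}\gamma_{j,F}$ to the $(i,j)$ entry, so naively restricting the index set changes the matrix. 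The correct move is therefore to \emph{split} each face rather than simply throw it away.

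First I would introduce, for each facet $F\in\Delta$, the rank-one summand $\Gamma(\gamma)_F\Gamma(\gamma)_F^T$, exactly as in the proof of Theorem~\ref{thm:convexity}, so that $\Sigma = \sum_{F\in\Delta}\Gamma(\gamma)_F\Gamma(\gamma)_F^T$ where the sum runs over facets (or, more wastefully, over all faces with the convention that non-facets get zero vectors). Restricting to rows and columns in $A$ commutes with this sum: $\Sigma_{A,A} = \sum_F (\Gamma(\gamma)_F)_A (\Gamma(\gamma)_F)_A^T$, where $(\Gamma(\gamma)_F)_A$ is the subvector of $\Gamma(\gamma)_F$ on coordinates in $A$. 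Now the support of $(\Gamma(\gamma)_F)_A$ is $F\cap A$, which is a face of $\Delta$ (being a subset of the face $F$) and is contained in $A$, hence a face of the induced subcomplex $\Delta_A$. Therefore each summand $(\Gamma(\gamma)_F)_A (\Gamma(\gamma)_F)_A^T$ is a rank one matrix lying in $\SSS^m_{\succeq 0}(F\cap A)$ with $F\cap A\in\Delta_A$, i.e. it lies on an extreme ray of $\im(\phi_{\Delta_A})$ as described by Theorem~\ref{thm:convexity} (after the harmless identification of $\SSS^{|A|}$ with the subspace of $\SSS^m$ of matrices vanishing outside the $A\times A$ block). Since $\im(\phi_{\Delta_A})$ is a convex cone by Theorem~\ref{thm:convexity}, the sum $\Sigma_{A,A}$ of these matrices lies in $\im(\phi_{\Delta_A})$.

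The one mild subtlety, and the step I would be most careful about, is the bookkeeping between the ambient space $\SSS^m$ and the smaller space $\SSS^{|A|}$: the map $\phi_{\Delta_A}$ is nominally defined as a map into $\SSS^{|A|}_{\succeq 0}$, so I should state once and for all that we identify $\SSS^{|A|}$ with $\SSS^m(G[A])\subseteq\SSS^m$, the submatrix indexed by $A$, under which $\phi_{\Delta_A}$ and its image are viewed inside $\SSS^m$; then ``$\Sigma_{A,A}\in\im(\phi_{\Delta_A})$'' makes literal sense and the argument above closes. Everything else is routine: the decomposition into rank-one pieces, the observation that taking a principal submatrix is linear, and the fact that the support of a restricted vector is the intersection of the original support with $A$. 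I would also remark that one may instead give a purely constructive proof — define $\gamma'_{i,F\cap A} := \gamma_{i,F}$ when $F\cap A$ is a facet realized this way, summing contributions from distinct $F$ with the same intersection by passing to a Cholesky step as in Theorem~\ref{thm:convexity} — but the convex-cone argument via Theorem~\ref{thm:convexity} is cleaner and avoids that combinatorial overlap entirely, so that is the route I would take.
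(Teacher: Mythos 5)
Your proof is correct, and it rests on the same two pillars as the paper's own argument: restricting the rows of $\Gamma(\gamma)$ to $A$, and then invoking Theorem~\ref{thm:convexity} to conclude that a sum of admissible pieces stays in $\im(\phi_{\Delta_A})$. Where you genuinely diverge is in the treatment of the faces $F\in\Delta\setminus\Delta_A$, and there your version is the more careful one. The paper writes $\Sigma_{A,A}=\Gamma_{A}(\gamma)\Gamma_{A}(\gamma)^T+\diag(\gamma')$ with $\gamma'_i=\sum_{F\in\Delta\setminus\Delta_A}\gamma_{i,F}^2$, i.e.\ it lumps the contribution of all faces not contained in $A$ into a \emph{diagonal} matrix; read in the natural way (columns of $\Gamma_A(\gamma)$ indexed by $\Delta_A$), that identity holds only when every face $F\not\subseteq A$ meets $A$ in at most one vertex, and fails in general --- for $\Delta=2^{\{1,2,3\}}$ and $A=\{1,2\}$ the face $F=\{1,2,3\}$ contributes $\gamma_{1,F}\gamma_{2,F}$ to the off-diagonal entry of the leftover. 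Your decomposition $\Sigma_{A,A}=\sum_{F\in\Delta}(\Gamma(\gamma)_F)_A(\Gamma(\gamma)_F)_A^T$ sidesteps this: each restricted column has support contained in $F\cap A$, which is a face of $\Delta$ contained in $A$ and hence a face of $\Delta_A$, so each summand lies in $\SSS_{\succeq 0}^m(F\cap A)$ and the description $\im(\phi_{\Delta_A})=\sum_{F'\in\Delta_A}\SSS_{\succeq 0}^m(F')$ from Theorem~\ref{thm:convexity} applies verbatim. Your remark about identifying $\SSS^{|A|}$ with the $A\times A$ block of $\SSS^m$ is the right convention and is consistent with how the paper itself uses $\phi_{\Delta_A}$. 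In short: same strategy and same key lemma, but your handling of the faces straddling $A$ and its complement is what makes the argument close in full generality, and it is the version I would keep.
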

\begin{proof}
  Write $\Sigma = \Gamma(\gamma)\Gamma(\gamma)^T$.  Let
  $\Gamma_{A}(\gamma)$ be the submatrix of $\Gamma(\gamma)$ obtained
  by removing all rows with index not in $A$.  Then $\Sigma_{A,A} =
  \Gamma_{A}(\gamma) \Gamma_{A}(\gamma)^T + \diag(\gamma')$ where
  $\gamma'_i = \sum_{F\in\Delta\setminus\Delta_A} \gamma_{i,F}^2$.
  The matrix $\Gamma_{A}(\gamma) \Gamma_{A}(\gamma)^T$ is in the image
  of $\phi_{\Delta_A}$, and so is the diagonal matrix
  $\diag(\gamma')$.  By convexity (Theorem~\ref{thm:convexity}),
  $\Sigma_{A,A}\in\im(\phi_{\Delta_A})$.
\end{proof}

The converse of the lemma does not hold.  If $\Delta$ is the edge
complex of a chordless cycle, then any matrix in $\mathbb{S}_{\succeq
  0}^m(G)$ has all of its proper principal submatrices in the image of
the corresponding map $\phi_{\Delta_A}$, but
$\im(\phi_G)\subsetneq\mathbb{S}_{\succeq 0}^m(G)$ by
Corollary~\ref{cor:surjective}.

\medskip

For a square matrix $M$ partitioned as
$$
M = \left( 
\begin{array}{cc}
A & B \\
C & D
\end{array}
\right),
$$
the {\em Schur complement} of a non-singular submatrix $D$ in $M$ is
defined as $M/D := A - BD^{-1}C$.  If $M$ is symmetric positive
semidefinite, then so is $M/D$.  If $D$ is further partitioned as
$$
D = \left( 
\begin{array}{cc}
E & F \\
G & H
\end{array}
\right),
$$
and $H$ and $D/H$ are non-singular, then the following {\em
  quotient formula} holds: $M/D = (M/H) / (D/H)$.  Proofs can be found
in textbooks on matrix theory.

For a graph $G = (V,E)$ and a proper subset of vertices $U \subset V$,
define a new graph $G/U$ on vertex set $V\backslash U$ as follows.  A
pair $\{i,j\} \subseteq V\backslash U$ is an edge in $G/U$ if $\{i,j\}
$ is an edge in $G$ or there is a path between $i$ and $j$ in $G$
through vertices in $U$.  For a simplicial complex $\Delta$ on ground
set $V$, define a new simplicial complex $\Delta/ U$ on $V\backslash
U$ where a set $A \subseteq V \backslash U$ forms a face if it is a
face in $\Delta$ or there exists a sequence of distinct elements $u_1,
u_2, \dots, u_k \in U$ and distinct faces $F_1, \dots, F_{k+1} \in
\Delta$ such that $u_i \in F_i \cap F_{i+1}$ and $A =
\bigcup_{i=1}^{k+1} F_i \backslash U$.  If the faces of $\Delta$ form
cliques in $G$, then the faces of $\Delta/U$ form cliques in $G/U$.

\begin{proposition}
  \label{prop:schur}
  Let $\Delta$ be a simplicial complex on $[m]$ and $U\subsetneq [m]$
  a proper subset of nodes.  If $\Sigma$ is in the image of
  $\phi_\Delta$ and $\Sigma_{U,U}$ is non-singular, then the Schur
  complement $\Sigma / \Sigma_{U,U}$ is in the image of
  $\phi_{\Delta/U}$.
\end{proposition}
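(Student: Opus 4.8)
Here is how I would approach it.

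The natural plan is induction on $|U|$, using the quotient formula for Schur complements recorded just above to peel off one vertex of $U$ at a time. The base case $U=\{u\}$ is the heart of the matter for a single vertex, and I would do it by an explicit formula. Write $\Sigma=\Gamma(\gamma)\Gamma(\gamma)^T=\sum_{F\in\Delta}\Gamma(\gamma)_F\Gamma(\gamma)_F^T$, and for each face $F$ let $a_F\in\mathbb{R}^{[m]\setminus\{u\}}$ be the vector obtained from the column $\Gamma(\gamma)_F$ by deleting its $u$-coordinate $\gamma_{u,F}$ (which is $0$ unless $u\in F$). Expanding $\sigma_{uu}=\sum_{F\ni u}\gamma_{u,F}^2$ and $\Sigma_{[m]\setminus\{u\},u}=\sum_{F\ni u}\gamma_{u,F}a_F$ and using a Lagrange-type identity, one gets
\[
\Sigma/\sigma_{uu}\;=\;\sum_{F\in\Delta:\,u\notin F}a_Fa_F^T\;+\;\frac{1}{\sigma_{uu}}\sum_{\{F,G\}\subseteq\Delta:\,u\in F\cap G}\bigl(\gamma_{u,G}a_F-\gamma_{u,F}a_G\bigr)\bigl(\gamma_{u,G}a_F-\gamma_{u,F}a_G\bigr)^T .
\]
Every summand is a rank-one positive semidefinite matrix: the ones in the first sum are supported on $F\subseteq[m]\setminus\{u\}$, a face of $\Delta/\{u\}$, and the ones in the second sum are supported on $(F\cup G)\setminus\{u\}$, the face of $\Delta/\{u\}$ obtained by merging $F$ and $G$ through $u$. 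Since $\im(\phi_{\Delta/\{u\}})$ is precisely the convex cone generated by such rank-one matrices (Theorem~\ref{thm:convexity}), this shows $\Sigma/\sigma_{uu}\in\im(\phi_{\Delta/\{u\}})$.

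For the inductive step, take $|U|\ge 2$, fix $u\in U$ and set $U'=U\setminus\{u\}$. As $\Sigma\succeq 0$ and $\Sigma_{U,U}$ is nonsingular we have $\sigma_{uu}>0$, so $\Sigma/\sigma_{uu}$ is defined and lies in $\im(\phi_{\Delta/\{u\}})$ by the base case; its principal submatrix indexed by $U'$ equals $\Sigma_{U,U}/\sigma_{uu}$, which is nonsingular (a Schur complement of a nonsingular matrix inside a nonsingular one). Applying the induction hypothesis to the complex $\Delta/\{u\}$ on ground set $[m]\setminus\{u\}$ and node set $U'$ gives $(\Sigma/\sigma_{uu})/(\Sigma_{U,U}/\sigma_{uu})\in\im(\phi_{(\Delta/\{u\})/U'})$, and the quotient formula identifies this Schur complement with $\Sigma/\Sigma_{U,U}$. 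It therefore suffices to check that every face of $(\Delta/\{u\})/U'$ is a face of $\Delta/U$, since then $\im(\phi_{(\Delta/\{u\})/U'})\subseteq\im(\phi_{\Delta/U})$. This containment of simplicial complexes — showing that a face produced by first merging faces of $\Delta$ through $u$ and then merging the results through the vertices of $U'$ is already obtainable by a single round of merging along vertices of $U$ — is the step I expect to be the genuine obstacle; it is a purely combinatorial statement about how the merging construction iterates, and one has to be careful that iteration creates no faces beyond those in $\Delta/U$.

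A route that avoids the induction (and makes that combinatorial core explicit) is to expand $\Sigma/\Sigma_{U,U}$ directly. By the Jacobi/cofactor identity, $(\Sigma/\Sigma_{U,U})_{ij}=\det\!\big(\Sigma_{U\cup\{i\},\,U\cup\{j\}}\big)/\det(\Sigma_{U,U})$ for $i,j\in[m]\setminus U$; applying Cauchy--Binet to $\Sigma_{U\cup\{i\},U\cup\{j\}}=\bigl(\Gamma(\gamma)\Gamma(\gamma)^T\bigr)_{U\cup\{i\},U\cup\{j\}}$ and then expanding each minor along its $i$-th (resp.\ $j$-th) row yields
\[
\Sigma/\Sigma_{U,U}\;=\;\frac{1}{\det\Sigma_{U,U}}\sum_{S\subseteq\Delta,\;|S|=|U|+1}W_S W_S^T,\qquad (W_S)_i=\det\!\big(\Gamma(\gamma)_{U\cup\{i\},\,S}\big).
\]
Here $W_S$ is supported on the union of the $([m]\setminus U)$-parts of those $F\in S$ for which $\det\!\big(\Gamma(\gamma)_{U,\,S\setminus\{F\}}\big)\neq 0$; when $W_S\neq 0$ these are exactly the members of the unique circuit of $S$ in the column matroid of $\Gamma(\gamma)_{U,\cdot}$. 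Such a circuit cannot be split into two nonempty parts whose columns use disjoint coordinates of $\mathbb{R}^U$ (else one part would already be dependent), so its faces are linked through shared vertices of $U$, and hence their union with $U$ deleted is a face of $\Delta/U$. Each term $\tfrac{1}{\det\Sigma_{U,U}}W_SW_S^T$ is then a rank-one positive semidefinite matrix supported on a face of $\Delta/U$, so by convexity of the image (Theorem~\ref{thm:convexity}) $\Sigma/\Sigma_{U,U}\in\im(\phi_{\Delta/U})$; specializing to $|U|=1$ recovers the Lagrange identity above. Either way, the one place that needs real work is verifying that a family of faces of $\Delta$ linked through distinct vertices of $U$ contributes only faces of $\Delta/U$.
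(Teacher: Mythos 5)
Your first route is essentially the paper's own proof: the same reduction to a single vertex $u$ via the quotient formula, the same Lagrange-type identity expressing $\Sigma/\sigma_{uu}$ as a sum of rank-one matrices supported on original and induced faces of $\Delta/\{u\}$ (the paper packages the cross terms as columns $\gamma'_{iF}=(\gamma_{iF_1}\gamma_{uF_2}-\gamma_{iF_2}\gamma_{uF_1})/\sqrt{\sigma_{uu}}$ of a matrix $\Gamma_{\Delta/u}(\gamma')$), and the same appeal to convexity of the image. The combinatorial compatibility $(\Delta/\{u\})/U'=\Delta/U$ that you flag as the remaining obstacle is simply asserted ``by construction'' in the paper, so you are if anything more cautious there; the Cauchy--Binet alternative you sketch is not pursued in the paper.
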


\begin{proof}
  If $U'$ is a non-empty proper subset of $U$, then by the quotient
  formula we have $\Sigma/\Sigma_{U,U} = (\Sigma / \Sigma_{U', U'}) /
  (\Sigma_{U,U} /\Sigma_{U',U'})$.  Moreover, $\Delta/U = (\Delta/U')
  / (U \backslash U')$ by construction.  Therefore, it suffices to
  prove the assertion when $U$ consists of only one vertex $u$.  We
  call a face $F$ of the complex $\Delta/u := \Delta/\{u\}$ {\em
    original} if $F$ is also a face of $\Delta$ and {\em induced} if
  $F = (F_1 \cup F_2) \backslash \{u\}$ for a pair of distinct faces
  $F_1,F_2$ of $\Delta$ that both contain $u$.  Note that a face can
  be both original and induced.
  
  Let $\Sigma = \Gamma(\gamma) \Gamma(\gamma)^T$ be in the image of
  $\phi_\Delta$.  Define as follows a new matrix
  $\Gamma_{\Delta/u}(\gamma') = [\gamma'_{v,F}]$ whose rows and
  columns are indexed by the vertices and the induced faces of
  $\Delta/u$, respectively.  Fix an arbitrary total order ``$\leq$''
  on the faces of $\Delta$.  For the induced face $F = (F_1 \cup F_2)
  \backslash \{u\}$ given by a pair of faces $F_1 < F_2$ of $\Delta$
  with $u\in F_1\cap F_2$, let
  $$
  \gamma'_{i F} = (\gamma_{i F_1} \gamma_{u F_2} - \gamma_{i F_2}
  \gamma_{u F_1} ) / \sqrt{\sigma_{uu}}.
  $$
  Here, $\gamma_{i F_j}$ is shorthand for $\gamma_{i, F_j}$ and
  $\gamma_{i F_j} = 0$ if $i \notin F_j$.
  
  Let $A = V \backslash \{u\}$. We now show the following:
  \begin{equation}
    \label{eqn:complement}
    \begin{split}
      \Sigma / \Sigma_{u,u} :=&\; \Sigma_{A,A} - \frac{\Sigma_{A,u}
        \Sigma_{u,A}}{\sigma_{uu}} = \; \Gamma_{\Delta_A}(\gamma)
      \Gamma_{\Delta_A}(\gamma)^T + \Gamma_{\Delta/u}(\gamma')
      \Gamma_{\Delta/u}(\gamma')^T,
    \end{split}
  \end{equation}
  where $\Gamma_{\Delta_{A}}(\gamma)$ is the submatrix of
  $\Gamma(\gamma)$ with rows and columns indexed respectively by $A$
  and faces $F \in \Delta$ contained in $A$.  The $ij$-entry on the
  right hand side is
  \begin{align*}
    & \displaystyle \sum_{\begin{subarray} \mbox{F} \in \Delta/u\\ F\:
        \mathrm{original}\end{subarray}  
        } \gamma_{iF} \gamma_{jF} + \sum_{\begin{subarray} \mbox{F}
        \in \Delta/u\\ F\: \mathrm{induced}\end{subarray}}
        \gamma'_{iF} \gamma'_{j,F} \\  
    =&   \sum_{\begin{subarray} \mbox{F} \in \Delta \\
        u\notin F\end{subarray}} \gamma_{iF} \gamma_{jF} +
    \frac{1}{\sigma_{uu}}  \sum_{\begin{subarray}  \mbox{F}_1 < F_2 \in
        \Delta\\ u \in F_1 \cap F_2\end{subarray}} (\gamma_{iF_1}
    \gamma_{uF_2} - \gamma_{iF_2}\gamma_{uF_1}) (\gamma_{jF_1}
    \gamma_{uF_2} - \gamma_{jF_2}\gamma_{uF_1})\\ 
    =&     \sum_{\begin{subarray} \mbox{F} \in \Delta \\
        u\notin F\end{subarray}} \gamma_{iF} \gamma_{jF} +
    \frac{1}{\sigma_{uu}} \sum_{\begin{subarray} \mbox{F}_1 \neq F_2 \in
        \Delta \\ u \in F_1 \cap F_2\end{subarray}} (-
    \gamma_{iF_1}\gamma_{jF_2}\gamma_{uF_1}\gamma_{uF_2} +
    \gamma_{iF_1}\gamma_{jF_1}\gamma_{uF_2}^2)\\ 
    =&   
    \sum_{\begin{subarray} \mbox{F} \in \Delta \\ u\notin F\end{subarray}}
    \gamma_{iF} \gamma_{jF} + \frac{1}{\sigma_{uu}}
    \sum_{\begin{subarray} \mbox{F}_1 , F_2 \in \Delta \\ u \in F_1 \cap
        F_2\end{subarray}} 
    (- \gamma_{iF_1}\gamma_{jF_2}\gamma_{uF_1}\gamma_{uF_2} + \gamma_{iF_1}\gamma_{jF_1}\gamma_{uF_2}^2)\\
    = &  
    \sum_{\begin{subarray} \mbox{F} \in \Delta \\ u\notin F\end{subarray}}
    \gamma_{iF} \gamma_{jF} +  
    \frac{1}{\sigma_{uu}} \sum_{\begin{subarray} \mbox{F}_1 \in \Delta \\ u \in F_1\end{subarray}} \gamma_{iF_1}\gamma_{jF_1}
    \sum_{\begin{subarray} \mbox{F}_2 \in \Delta \\ u \in F_2\end{subarray}}  \gamma_{uF_2}^2
    - \frac{1}{\sigma_{uu}}   \sum_{\begin{subarray} \mbox{F}_1 , F_2 \in \Delta \\ u \in F_1 \cap F_2\end{subarray}}
    \gamma_{iF_1}\gamma_{jF_2}\gamma_{uF_1}\gamma_{uF_2} \\
    =& 
    \sum_{F \in \Delta} \gamma_{iF} \gamma_{jF} - \frac{1}{\sigma_{uu}}   \sum_{\begin{subarray} \mbox{F}_1 , F_2 \in \Delta \\ u \in F_1 \cap F_2\end{subarray}}
    \gamma_{iF_1}\gamma_{jF_2}\gamma_{uF_1}\gamma_{uF_2},
  \end{align*}
  which is equal to the $ij$-entry on the left hand side because
  $\sigma_{ij} = \sum_{F \in \Delta} \gamma_{iF} \gamma_{jF}$.  From
  (\ref{eqn:complement}) and the convexity of $\im(\phi_{\Delta/u})$
  shown in Theorem \ref{thm:convexity}, it follows that Schur
  complement $\Sigma/\Sigma_{u,u}$ is in the image of
  $\phi_{\Delta/u}$.
\end{proof}

The converse of Proposition \ref{prop:schur} cannot hold in general.
Let $G$ be a 3-cycle and consider $\phi_G$ given by the edge complex
of $G$.  Then any Schur complement $\Sigma/ \Sigma_{u,u}$ of $\Sigma
\in \SSS_{\succ 0}^m(G)$ is in $\im(\phi_{G / \{u\}})$, but not every
such matrix $\Sigma$ is in $\im(\phi_G)$.

\section{Chordless cycles}
\label{sec:chordless-cycles}

Let $C_m$ be the chordless $m$-cycle with edges
$\{1,2\},\{2,3\},\dots,\{1,m\}$.  Excluding trivial cases, assume that
$m\ge 3$. Let $\Delta$ be the simplicial complex whose facets are the
edges of $C_m$, and define $\phi_{C_m}=\phi_\Delta$.  For $m \geq 4$, there
is no other simplicial complex $\Delta'$ that gives a parametrization
$\phi_{\Delta'}$ whose image is a full-dimensional subset of
$\SSS_{\succeq 0}^m(C_m)$.  In this section we give a semi-algebraic
description of $\im(\phi_{C_m})$.

We begin with a simple yet important observation.  For a symmetric
matrix $\Sigma\in\SSS^m$ and two distinct indices $i,j\in [m]$, define
$\Sigma^{(ij)}$ to be the symmetric matrix obtained by negating the
$(i,j)$ and $(j,i)$ entries of $\Sigma$.

\begin{lemma}
  \label{lem:negating}
  Suppose $\Sigma\in\SSS_{\succeq 0}^m$ is a positive semidefinite
  matrix, and $\Delta$ is the edge complex of a graph.  If $i\not= j$
  and $\{i,j\}\in \Delta$, then $\Sigma$ is in the image of
  $\phi_\Delta$ if and only if $\Sigma^{(ij)}$ is in
  $\im(\phi_\Delta)$.
\end{lemma}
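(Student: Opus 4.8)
The plan is to realize the operation $\Sigma\mapsto\Sigma^{(ij)}$ by an explicit sign change in the parameters. Since $\Delta$ is an edge complex and $\{i,j\}$ is an edge of the underlying graph, the set $\{i,j\}$ is the \emph{only} face of $\Delta$ containing both $i$ and $j$: a face containing two distinct vertices has at least two elements, hence is a facet, hence an edge, and the only edge on $\{i,j\}$ is $\{i,j\}$ itself. By (\ref{eq:phi-coords}) this gives $\phi_\Delta(\gamma)_{ij}=\gamma_{i,\{i,j\}}\,\gamma_{j,\{i,j\}}$ for every parameter vector $\gamma$.

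First I would fix $\gamma$ and define $\gamma'$ to agree with $\gamma$ in every coordinate except $\gamma'_{i,\{i,j\}}=-\gamma_{i,\{i,j\}}$, and then verify $\phi_\Delta(\gamma')=\phi_\Delta(\gamma)^{(ij)}$. For the $(i,j)$ and $(j,i)$ entries this is immediate from the formula just obtained. For every other entry $(k,\ell)$ — diagonal entries included — one inspects (\ref{eq:phi-coords}) and (\ref{eq:phi-coords-diag}): the coordinate $\gamma_{i,\{i,j\}}$ occurs in the sum defining that entry only when the face $\{i,j\}$ contains both $k$ and $\ell$, which forces $\{k,\ell\}=\{i,j\}$ when $k\neq\ell$, and forces $k=\ell=i$ otherwise; in the latter case it enters only through its square $\gamma_{i,\{i,j\}}^2$, which is insensitive to the sign flip. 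Hence $\phi_\Delta(\gamma')$ and $\phi_\Delta(\gamma)$ differ precisely in the $(i,j)$ and $(j,i)$ positions, and there only by a sign, i.e.\ $\phi_\Delta(\gamma')=\phi_\Delta(\gamma)^{(ij)}$.

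The lemma then follows at once: if $\Sigma=\phi_\Delta(\gamma)$ lies in $\im(\phi_\Delta)$, then $\Sigma^{(ij)}=\phi_\Delta(\gamma')$ also lies in $\im(\phi_\Delta)$; and since $(\Sigma^{(ij)})^{(ij)}=\Sigma$, the reverse implication is obtained by applying what we have just proved to $\Sigma^{(ij)}$ in place of $\Sigma$. I do not anticipate a genuine obstacle. The one point requiring a moment's care is that the edge complex also contains all singleton faces, but a singleton cannot contain two distinct vertices, so it does not disturb the uniqueness assertion of the first paragraph; the hypothesis $\{i,j\}\in\Delta$ is exactly what makes that assertion — and hence the lemma — have content. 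The positive semidefiniteness assumption on $\Sigma$ is not actually used in the argument; it merely records the ambient cone and holds automatically for matrices in $\im(\phi_\Delta)$.
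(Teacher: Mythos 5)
Your proposal is correct and is essentially the paper's own proof: the paper likewise defines $\bar\gamma$ by negating the single entry $\gamma_{i,\{i,j\}}$ (leaving $\gamma_{j,\{i,j\}}$ unchanged) and observes that $\phi_\Delta(\bar\gamma)=\Sigma^{(ij)}$. You simply spell out in more detail why no other entry of $\phi_\Delta(\gamma)$ is affected, which the paper leaves implicit.
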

\begin{proof}
  If $\Sigma=\phi_\Delta(\gamma)$, then
  $\Sigma^{(ij)}=\phi_\Delta(\bar\gamma)$, where $\bar\gamma$ is
  identical to $\gamma$ except for the single entry
  $\gamma_{ij}=\gamma_{i,\{i,j\}}$ that is replaced by its negative
  $-\gamma_{ij}$.  The entry $\gamma_{ji}$ remains unchanged.
\end{proof}

Note that Lemma~\ref{lem:negating} immediately yields the necessary
condition stated in (\ref{eq:det-R-12}) in
Example~\ref{ex:three-cycle} about $C_3=K_3$, the complete graph on 3
nodes.  The lemma can also be used to give an explicit example of a
matrix not in the image of the parametrization for $C_m$, $m\ge 3$.

\begin{example}
  \label{ex:counter-Cm}
  For $m\ge 3$, define the symmetric $m\times m$ matrix
  \[
  \Sigma(\rho)=
  \begin{pmatrix}
    1 & \tfrac{1}{2} & & & & \tfrac{1}{2}\rho\\
    \tfrac{1}{2} & 1 & \tfrac{1}{2} &  &  &\\
    & \ddots & \ddots & \ddots & \ddots \\
    &  &  & \tfrac{1}{2} & 1 & \tfrac{1}{2}\\
    \tfrac{1}{2}\rho &  &  &   & \tfrac{1}{2}& 1
  \end{pmatrix},
  \]
  where all omitted entries are zero such that
  $\Sigma(\rho)\in\SSS^m(C_m)$.  Omitting the $m$-th row and column of
  $\Sigma(\rho)$ yields a positive definite tridiagonal matrix.
  Hence, $\Sigma(\rho)$ is in the graphical cone $\SSS_{\succeq
    0}^m(C_m)$ if and only if $\det(\Sigma(\rho))\ge 0$.  Using Laplace
  expansions and the recursive formula for the determinant of a
  tridiagonal matrix, one can show that
  \[
  \det \Sigma(\rho) = 
  \begin{cases}
    \frac{1}{2^m}\big[ m+1 - (m-1)\rho\big](1+\rho) &\text{if}\; m
    \,\text{is odd},\\
    \frac{1}{2^m}\big[ m+1 + (m-1)\rho\big](1-\rho) &\text{if}\; m
    \,\text{is even}.
  \end{cases}
  \]
  If $m$ is odd, choose $\rho\in (1,1+2/(m-1))$.  If $m$ is even,
  choose $\rho\in(-1-2/(m-1),-1)$.  Then the determinant of
  $\Sigma(\rho)$ is positive but the determinant of $\Sigma(-\rho)$ is
  negative.  Since $\Sigma(\rho)^{(1m)}=\Sigma(-\rho)$, it follows
  from Lemma~\ref{lem:negating} that $\Sigma(\rho)$ is in
  $\SSS_{\succeq 0}^m(C_m)$ but not in the image of $\phi_{C_m}$.
  \qed
\end{example}

We now state the main result of this section, a semi-algebraic
description of the image of $\phi_{C_m}$.  Let $\mathcal{M}(C_m)$ be
the collection of all (not necessarily maximal) matchings of $C_m$.  A
matching is any set $M$ of pairwise disjoint edges, that is,
$\{i,j\},\{k,l\}\in M$ implies that $\{i,j\}\cap\{k,l\}=\emptyset$.
For $M\in\mathcal{M}(C_m)$, we write $[m]\setminus M$ to denote the
set of nodes not incident to any edge in $M$.  If
$\Sigma=(\sigma_{ij})$ is a matrix in $\SSS_{\succeq 0}^m(C_m)$ then
its determinant can be expanded as
\begin{align}
  \label{eq:detSigma-expansion}
  \det(\Sigma) &= (-1)^{m+1}\cdot 2 \prod_{i=1}^m \sigma_{i,i+1}\; +\;
  \sum_{M\in\mathcal{M}(C_m)} (-1)^{|M|}
  \prod_{\{i,j\}\in M} \sigma_{ij}^2 \prod_{i \in\, [m]\setminus
    M} \sigma_{ii};
\end{align}
compare \cite[\S1.4, eqn.~(1.42)]{Cvetkovic1995}.
In the first term, which corresponds to the entire cycle $C_m$, the indices
are read modulo $m$ such that $\sigma_{m,m+1}\equiv \sigma_{1m}$.  The
following theorem is the main result of this section.

\begin{theorem}
  \label{thm:semi-alg-cycle}
  A matrix $\Sigma=(\sigma_{ij})\in \SSS_{\succeq 0}^m(C_m)$ is in the
  image of $\phi_{C_m}$ if and only if
  \[
  \sum_{M\in\mathcal{M}(C_m)} (-1)^{|M|}
  \prod_{\{i,j\}\in M} \sigma_{ij}^2 \prod_{i \in\, [m]\setminus
    M} \sigma_{ii} \quad \ge \quad 2 \prod_{i=1}^m
  |\sigma_{i,i+1}|.
  \]
\end{theorem}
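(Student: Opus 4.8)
The plan is to prove the two directions separately, and the key tool throughout will be the negation trick of Lemma~\ref{lem:negating} together with the determinant expansion \eqref{eq:detSigma-expansion} and the submatrix/Schur-complement machinery of Section~\ref{sec:sumbatrices-schur-complements}.

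\textbf{Necessity.} Suppose $\Sigma = \phi_{C_m}(\gamma)$. Write $\sigma_{i,i+1} = \gamma_{i,i+1}\gamma_{i+1,i}$ for the off-diagonal entries around the cycle. By Lemma~\ref{lem:negating}, negating any single edge entry $\sigma_{i,i+1}$ keeps the matrix in $\im(\phi_{C_m})$, hence in $\SSS_{\succeq 0}^m(C_m)$, so $\det$ of the negated matrix is still $\ge 0$. In the expansion \eqref{eq:detSigma-expansion}, only the first ``full cycle'' term changes sign under a single such negation (every matching term involves $\sigma_{i,i+1}^2$ and is unaffected); choosing the signs of the edges so that $(-1)^{m+1}\cdot 2\prod \sigma_{i,i+1}$ becomes $-2\prod_{i}|\sigma_{i,i+1}|$, positive semidefiniteness of that negated matrix forces
\[
\sum_{M\in\mathcal{M}(C_m)} (-1)^{|M|} \prod_{\{i,j\}\in M}\sigma_{ij}^2 \prod_{i\in[m]\setminus M}\sigma_{ii} \;-\; 2\prod_{i=1}^m|\sigma_{i,i+1}| \;\ge\; 0,
\]
which is exactly the claimed inequality. (One should check the signs are actually realizable by a legitimate negation sequence; since $C_m$ is connected with $m$ edges and the product of sign changes can be any $\pm1$ via Lemma~\ref{lem:negating} applied edge by edge, this is fine — this sign bookkeeping is the one place to be careful.)

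\textbf{Sufficiency.} This is the harder direction. Given $\Sigma\in\SSS_{\succeq 0}^m(C_m)$ satisfying the inequality, I want to construct $\gamma$ with $\phi_{C_m}(\gamma)=\Sigma$. The natural approach is induction on $m$, using Lemma~\ref{lem:submatrix} and Proposition~\ref{prop:schur}: removing a vertex $u$ from $C_m$ leaves a path (whose edge complex gives a surjective parametrization by Corollary~\ref{cor:forest}, since a path is a forest), and $C_m/\{u\}$ is a shorter cycle (or a path, in degenerate cases), for which the parametrization of the Schur complement is understood inductively. The plan is: if $\sigma_{uu}$ can be split as a sum of squares into contributions for the two cycle-edges at $u$ plus a leftover diagonal term in such a way that the Schur complement $\Sigma/\Sigma_{u,u}$ lands in $\im(\phi_{C_{m-1}})$ — controlled by the induction hypothesis applied to the determinant inequality for the $(m-1)$-cycle obtained by short-circuiting through $u$ — then one can lift back. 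Concretely, one parametrizes the ``degrees of freedom'' at $u$ (how to divide $\sigma_{uu}$ and the two edge weights $\gamma_{u,u-1},\gamma_{u,u+1}$) and shows that the inequality for $\Sigma$ is precisely what guarantees that \emph{some} valid choice makes the reduced matrix satisfy the corresponding inequality for $C_{m-1}$. The base case $m=3$ ($=K_3$) is the correlation-matrix computation already indicated in Example~\ref{ex:three-cycle}, where the inequality reduces to $1-\rho_{12}^2-\rho_{13}^2-\rho_{23}^2-2\rho_{12}\rho_{13}\rho_{23}\ge 0$ and an explicit Cholesky-type factorization exhibits the preimage.

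\textbf{Main obstacle.} The real work is the inductive step in sufficiency: showing that the single scalar inequality for $C_m$ translates, after an optimal choice of how to allocate $u$'s weight, into the inequality for $C_{m-1}$. This requires understanding how the determinant expansion \eqref{eq:detSigma-expansion} behaves under the Schur complement $\Sigma\mapsto\Sigma/\Sigma_{u,u}$ — in particular relating matchings of $C_m$ to matchings of $C_{m-1}$ with the two edges at $u$ either both absent (giving a ``path through $u$'' contribution) or present — and then optimizing a quadratic-in-the-splitting expression. I expect the identity $\det\Sigma = \sigma_{uu}\cdot \det(\Sigma/\Sigma_{u,u})$ combined with a careful case analysis of which matching terms survive to be the crux; one may also need to handle the boundary case where $\Sigma_{u,u}$ or its principal minors vanish by a continuity/limiting argument, which is legitimate since $\im(\phi_{C_m})$ is closed (Lemma~\ref{lem:full-dim}).
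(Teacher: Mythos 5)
Your necessity argument is essentially the paper's: negate one edge via Lemma~\ref{lem:negating}, observe that only the full-cycle term in (\ref{eq:detSigma-expansion}) changes sign, and conclude from positive semidefiniteness of both $\Sigma$ and $\Sigma^{(12)}$ that $\min\{\det(\Sigma),\det(\Sigma^{(12)})\}\ge 0$, which is exactly the stated inequality. A single negation already suffices to make the cycle term equal to $-2\prod_i|\sigma_{i,i+1}|$, so the multi-edge sign bookkeeping you worry about is not needed.

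The sufficiency direction, however, contains a genuine gap: what you give is a plan whose entire content is deferred to the ``inductive step,'' which you acknowledge you have not carried out, and that step faces a concrete obstruction. Proposition~\ref{prop:schur} only goes one way, and the paper notes explicitly (right after its proof) that the converse fails already for the $3$-cycle: knowing that $\Sigma/\Sigma_{u,u}$ lies in the image for the contracted complex does not let you lift back to a preimage of $\Sigma$. To make a vertex-removal argument work you would instead have to show that the weight $\sigma_{uu}$ and the two edge parameters at $u$ can be allocated --- say $\gamma_{u,u-1}^2=s$, $\gamma_{u,u+1}^2=t$ with $s+t\le\sigma_{uu}$ --- so that $\Sigma_{A,A}-\diag(\sigma_{u,u+1}^2/t,0,\dots,0,\sigma_{u-1,u}^2/s)$ (which is \emph{not} the Schur complement) is positive semidefinite on the path $A=[m]\setminus\{u\}$, where Corollary~\ref{cor:forest} applies; proving that the single scalar inequality for $C_m$ guarantees a feasible $(s,t)$ is precisely the hard part, and it is absent. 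Your base case $m=3$ is also not established by Example~\ref{ex:three-cycle}, which only records the necessary condition. The paper's route is entirely different: it sets $\gamma_{i,\{i\}}=0$, reduces to the square system (\ref{eq:cycle-eqns1-no-diag})--(\ref{eq:cycle-eqns2-no-diag}), proves by elimination and a Fibonacci specialization (Lemma~\ref{lem:bernstein}) that generic $\Sigma$ admits $2^{m+1}$ \emph{complex} solutions, and then shows (Lemma~\ref{lem:quartic}) that the hypothesized inequality forces every complex solution to be real, because $\gamma_{12}$ satisfies a quadratic in $\gamma_{12}^2$ with negative leading and constant coefficients, positive middle coefficient, and discriminant $\det(\Sigma)\det(\Sigma^{(12)})>0$; closedness and full-dimensionality of the image (Lemma~\ref{lem:full-dim}) then dispose of non-generic $\Sigma$. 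You would need either to adopt an argument of that kind or to actually carry out the feasibility analysis at the removed vertex before your sufficiency direction can be accepted.
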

\begin{proof}  {\em (Necessity)}
  A matrix $\Sigma\in\im(\phi_{C_m})$ is positive semidefinite and
  thus has a non-negative determinant.  Pick any edge in $C_m$, say
  $\{1,2\}$.  Then, by Lemma~\ref{lem:negating}, $\Sigma^{(12)}$ is
  positive semidefinite.  Hence,
  \begin{multline*}
    \min\left\{\det(\Sigma),\det(\Sigma^{(12)})\right\} =\\
    -2
    \prod_{i=1}^m |\sigma_{i,i+1}|\; +\;
    \sum_{M\in\mathcal{M}(C_m)} (-1)^{|M|}
    \prod_{\{i,j\}\in M} \sigma_{ij}^2 \prod_{i \in\,
      [m]\setminus M} \sigma_{ii} \quad \ge \quad 0.
  \end{multline*}
  
  {\em (Sufficiency)} We need to show that under the assumed condition
  on $\Sigma\in\SSS_{\succeq 0}(C_m)$, the equation system
  $\phi_{C_m}(\gamma)=\Sigma$ has a feasible solution $(\gamma)$ in
  $(\mathbb{R})^V \times (\mathbb{R}^2)^E$.  Our proof will show that
  $\gamma_{i,\{i\}}$ does not play an important role and can simply be
  set to zero.  Since $\im(\phi_{C_m})$ is closed and full-dimensional
  in $\SSS_{\succeq 0}(G)$, it suffices to show that a dense (Zariski
  open) subset of points $\Sigma$ satisfying the necessity condition
  is contained in $\im(\phi_{C_m})$.  We show that for positive
  definite $\Sigma$, any complex solution $\gamma$ (with
  $\gamma_{i,\{i\}} = 0$ for all $i = 1,\dots,m$) is in fact real and
  thus feasible (Lemma~\ref{lem:quartic}).  The proof is completed by
  demonstrating the existence of complex solutions for generic
  $\Sigma$ (Lemma~\ref{lem:bernstein}).
 \end{proof}

The following is an immediate consequence of  Theorem \ref{thm:semi-alg-cycle}. 

\begin{corollary}
\label{cor:suff}
For a positive semidefinite matrix $\Sigma = (\sigma_{ij})$ in $\mathbb{S}_{\succeq 0}(C_m)$ the following are equivalent:
\begin{enumerate}
\item $\Sigma$ is in the image of $\phi_{C_m}$.
\item $\Sigma^{(ij)}$ is positive semidefinite for all edges $ij \in C_m$.
\item $\Sigma^{(ij)}$ is positive semidefinite for some edge $ij \in C_m$.
\end{enumerate}
\end{corollary}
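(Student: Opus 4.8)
The plan is to derive all three equivalences directly from Theorem~\ref{thm:semi-alg-cycle}, Lemma~\ref{lem:negating}, and the determinant expansion~(\ref{eq:detSigma-expansion}). The crux is an observation already used in the necessity part of the proof of Theorem~\ref{thm:semi-alg-cycle}: for any edge $\{i,j\}$ of $C_m$, passing from $\Sigma$ to $\Sigma^{(ij)}$ negates exactly one factor of the cyclic product $\prod_{i=1}^{m}\sigma_{i,i+1}$ and leaves every diagonal entry $\sigma_{ii}$ and every square $\sigma_{kl}^{2}$ unchanged. Writing
\[
S(\Sigma)\;=\;\sum_{M\in\mathcal{M}(C_m)}(-1)^{|M|}\prod_{\{k,l\}\in M}\sigma_{kl}^{2}\prod_{k\in[m]\setminus M}\sigma_{kk}
\]
for the ``matching sum'' in~(\ref{eq:detSigma-expansion}), this gives $S(\Sigma^{(ij)})=S(\Sigma)$ while the cycle term changes sign, so that
\[
\min\bigl\{\det(\Sigma),\;\det(\Sigma^{(ij)})\bigr\}\;=\;S(\Sigma)\;-\;2\prod_{i=1}^{m}|\sigma_{i,i+1}|
\]
for every edge $\{i,j\}$ of $C_m$. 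By Theorem~\ref{thm:semi-alg-cycle}, the right-hand side is nonnegative precisely when $\Sigma\in\im(\phi_{C_m})$.

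With this identity in hand the implications are short. For (1)$\Rightarrow$(2): if $\Sigma\in\im(\phi_{C_m})$ then, since every edge of $C_m$ is a face of the edge complex defining $\phi_{C_m}$, Lemma~\ref{lem:negating} gives $\Sigma^{(ij)}\in\im(\phi_{C_m})\subseteq\SSS_{\succeq 0}^{m}$; hence $\Sigma^{(ij)}$ is positive semidefinite for every edge $\{i,j\}$. The implication (2)$\Rightarrow$(3) is immediate since $C_m$ has at least one edge.

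For (3)$\Rightarrow$(1): if $\Sigma^{(ij)}$ is positive semidefinite for some edge $\{i,j\}$, then $\Sigma$ and $\Sigma^{(ij)}$ are both positive semidefinite and hence both have nonnegative determinant, so the displayed minimum is nonnegative; Theorem~\ref{thm:semi-alg-cycle} then yields $\Sigma\in\im(\phi_{C_m})$. This closes the cycle of implications.

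There is essentially no obstacle here beyond recording the displayed formula for the minimum of the two determinants; the rest is bookkeeping with~(\ref{eq:detSigma-expansion}) and the containment $\im(\phi_{C_m})\subseteq\SSS_{\succeq 0}^{m}$. The one point to keep track of is that in (3)$\Rightarrow$(1) the formula is applied to whatever single edge is supplied, so no appeal to the (correct but unneeded) fact that $\det(\Sigma^{(ij)})$ is independent of the chosen edge is required.
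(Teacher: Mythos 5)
Your proposal is correct and follows essentially the same route as the paper: $(1)\Rightarrow(2)$ via Lemma~\ref{lem:negating}, $(2)\Rightarrow(3)$ trivially, and $(3)\Rightarrow(1)$ by observing that nonnegativity of both $\det(\Sigma)$ and $\det(\Sigma^{(ij)})$ forces the inequality of Theorem~\ref{thm:semi-alg-cycle}. The only difference is that you spell out the identity $\min\{\det(\Sigma),\det(\Sigma^{(ij)})\}=S(\Sigma)-2\prod_{i=1}^m|\sigma_{i,i+1}|$ explicitly, which the paper leaves implicit (it appears in the necessity part of the proof of Theorem~\ref{thm:semi-alg-cycle}).
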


\begin{proof}
By Lemma \ref{lem:negating}, we have $(1)$ implies $(2)$.  It is obvious that $(2)$ implies $(3)$.  If both $\Sigma$ and $\Sigma^{(ij)}$ have non-negative determinants for some edge $ij \in C_m$, then the inequality in Theorem \ref{thm:semi-alg-cycle} is satisfied, so we have $(3)$ implies $(1)$.
\end{proof}

The semi-algebraic condition from Theorem~\ref{thm:semi-alg-cycle} is
easily verified, and we can use it to compute the spherical volume of
the cone $\im(\phi_{C_m})$ by Monte Carlo integration.
Table~\ref{tab:volumes} shows which fraction of the cone
$\mathbb{S}_{\succeq 0}(C_m)$ is covered by the image of $\phi_{C_m}$,
when quantifying this by the ratio of the spherical volumes of the two
cones.  The rounded ratios were computed by simulating 100,000
matrices in $\mathbb{S}_{\succeq 0}(C_m)$.  In terms of the ratio of
spherical volumes, the difference between $\im(\phi_{C_m})$ and
$\mathbb{S}_{\succeq 0}(C_m)$ is largest for $m= 3$ and becomes rather
minor for $m=6,7$.

\begin{table}[t]
  \centering
  \caption{Spherical volume of the image of $\phi_{C_m}$ as a fraction
    of the spherical volume of the cone $\mathbb{S}_{\succeq 0}(C_m)$.} 
  \label{tab:volumes}
  \begin{tabular}{cccccc}
    \hline    \hline
    $m$ & 3 & 4 & 5 & 6 & 7\\
    Vol & 0.78 & 0.90 & 0.95 & 0.98 & 0.99\\
    \hline
  \end{tabular}
\end{table}

\medskip

The remainder of the section is devoted to the details of the proof of
the sufficiency of the condition in Theorem~\ref{thm:semi-alg-cycle}.
As mentioned above, our approach is to study the equation system
$\phi_{\Delta}(\gamma)=\Sigma$ for a given matrix
$\Sigma=(\sigma_{ij})\in\SSS_{\succeq 0}(G)$, which is treated as a
parameter to the system.  For $\Delta= E(C_m)$, the equations take the
form:
\begin{subequations}
\begin{align}
  \label{eq:cycle-eqns1}
  \gamma_{i,\{i\}}^2+\gamma_{i,i-1}^2+\gamma_{i,i+1}^2&=\sigma_{ii},  &i=1,\dots,m,\\
  \label{eq:cycle-eqns2}
  \gamma_{i-1,i}\gamma_{i,i-1}&=\sigma_{i-1,i}, &i=1,\dots,m.
\end{align}
\end{subequations}
where $\gamma_{i, j}$ denotes $\gamma_{i,\{i,j\}}$ and indices are read modulo $m$ such that $0\equiv m$ and $1\equiv m+1$.
The $2m$ equations in (\ref{eq:cycle-eqns1}) and (\ref{eq:cycle-eqns2})
involve $3m$ unknowns and have an $m$-dimensional solution set in
$\mathbb{C}^{3m}$.  In the sequel we simply omit the unknowns $\gamma_{i,\{i\}}$ from the
system.  In other words, we study the $2m$ equations in $2m$ unknowns:
\begin{subequations}
\begin{align}
  \label{eq:cycle-eqns1-no-diag}
  \gamma_{i,i-1}^2+\gamma_{i,i+1}^2&=\sigma_{ii},  &i=1,\dots,m,\\
  \label{eq:cycle-eqns2-no-diag}
  \gamma_{i-1,i}\gamma_{i,i-1}&=\sigma_{i-1,i}, &i=1,\dots,m.
\end{align}
\end{subequations}
%


We will show that  system (\ref{eq:cycle-eqns1-no-diag})-(\ref{eq:cycle-eqns2-no-diag}) has a real solution if $\Sigma$ satisfies the condition from Theorem~\ref{thm:semi-alg-cycle}, which will imply that the system (\ref{eq:cycle-eqns1})-(\ref{eq:cycle-eqns1}) has a real solution, too.
Somewhat surprisingly it suffices to argue that they
have a complex solution, as is made precise in the next lemma.

\begin{lemma}
  \label{lem:quartic}
  If a positive definite matrix $\Sigma\in\SSS_{\succeq 0}(C_m)$
  satisfies the necessary condition from
  Theorem~\ref{thm:semi-alg-cycle}, then all complex solutions to the
  equations~(\ref{eq:cycle-eqns1-no-diag})-(\ref{eq:cycle-eqns2-no-diag})
  are in fact real.
\end{lemma}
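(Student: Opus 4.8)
The plan is to eliminate variables in (\ref{eq:cycle-eqns1-no-diag})--(\ref{eq:cycle-eqns2-no-diag}) and reduce the claim to two facts about a single $2\times2$ matrix. I would first note that it suffices to treat the case $\sigma_{i,i+1}\neq0$ for all $i$ (the only one needed later, as the proof of Theorem~\ref{thm:semi-alg-cycle} restricts to a Zariski-dense set). Then any complex solution has $\gamma_{i,i+1}\neq0$, so (\ref{eq:cycle-eqns2-no-diag}) gives $\gamma_{i+1,i}=\sigma_{i,i+1}/\gamma_{i,i+1}$, and substituting into (\ref{eq:cycle-eqns1-no-diag}) gives, with $u_i:=\gamma_{i,i+1}^{2}$,
\[
u_i \;=\; \sigma_{ii}-\frac{\sigma_{i-1,i}^{2}}{u_{i-1}},\qquad i\in[m]\ \ (\text{indices mod }m).
\]
Writing $M_i=\left(\begin{smallmatrix}\sigma_{ii}&-\sigma_{i-1,i}^{2}\\ 1&0\end{smallmatrix}\right)$ and $M:=M_mM_{m-1}\cdots M_1$, the cyclic compatibility of this recursion says precisely that $u_m$ is a fixed point of the linear-fractional map attached to $M$, i.e.\ the slope of an eigenvector of $M$. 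If $u_m$ is real, then so are all the $u_i$ (the recursion has real coefficients), and then $\gamma_{i,i+1}=\pm\sqrt{u_i}$ and $\gamma_{i+1,i}=\sigma_{i,i+1}/\gamma_{i,i+1}$ are real as soon as each $u_i\ge0$. So I would reduce the lemma to: (a) $M$ has real eigenvalues; and (b) the $u_i$ produced from any real fixed point $u_m$ are all nonnegative.

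For (a), I would compute $\det M=\prod_i\det M_i=\prod_i\sigma_{i-1,i}^{2}$ and, by a standard transfer-matrix (cyclic continuant) identity of the type underlying the expansion (\ref{eq:detSigma-expansion}), $\operatorname{tr}M=\sum_{M'\in\mathcal{M}(C_m)}(-1)^{|M'|}\prod_{\{i,j\}\in M'}\sigma_{ij}^{2}\prod_{i\in[m]\setminus M'}\sigma_{ii}=:S$. Comparing $\operatorname{tr}M$ and $\det M$ with (\ref{eq:detSigma-expansion}) applied to $\Sigma$ and to $\Sigma^{(12)}$ (in which the cycle term switches sign) then gives
\[
(\operatorname{tr}M)^{2}-4\det M \;=\; S^{2}-4\Big(\prod_i\sigma_{i,i+1}\Big)^{2} \;=\; \det(\Sigma)\cdot\det\big(\Sigma^{(12)}\big).
\]
Here $\det\Sigma\ge0$ because $\Sigma$ is positive semidefinite, and $\det\Sigma^{(12)}=S-(-1)^{m+1}2\prod_i\sigma_{i,i+1}\ge S-2\prod_i|\sigma_{i,i+1}|\ge0$ by the hypothesized inequality. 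Hence the discriminant of the characteristic polynomial of the real matrix $M$ is nonnegative, so both eigenvalues — and with them all fixed points of the linear-fractional map, i.e.\ all possible values of $u_m$ — are real.

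The hard part is (b), where positive definiteness genuinely enters. I would argue by contradiction: suppose $u_j<0$. Deleting vertex $j$ leaves a path on the remaining $m-1$ vertices whose principal submatrix of $\Sigma$ is positive definite, so its Cholesky pivots $d_{j+1},\dots,d_{j-1}$ (taken in path order, indices mod $m$) are positive and satisfy $d_{j+1}=\sigma_{j+1,j+1}$, $d_{j+k}=\sigma_{j+k,j+k}-\sigma_{j+k-1,j+k}^{2}/d_{j+k-1}$. Since $u_j<0$ we get $u_{j+1}=\sigma_{j+1,j+1}-\sigma_{j,j+1}^{2}/u_j\ge d_{j+1}$, and as $t\mapsto\sigma_{kk}-\sigma_{k-1,k}^{2}/t$ is nondecreasing on $(0,\infty)$, induction gives $u_{j+k}\ge d_{j+k}>0$ for all $k=1,\dots,m-1$; in particular $u_{j-1}\ge d_{j-1}>0$. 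But $u_j<0$ forces $\sigma_{jj}u_{j-1}<\sigma_{j-1,j}^{2}$, hence $\sigma_{jj}d_{j-1}-\sigma_{j-1,j}^{2}<0$. On the other hand, if $\widetilde\Sigma$ denotes $\Sigma$ with its $(j,j+1)$- and $(j+1,j)$-entries zeroed out (a Hamiltonian-path matrix of $C_m$), the tridiagonal determinant recursion along that path gives $\det\widetilde\Sigma=(\sigma_{jj}d_{j-1}-\sigma_{j-1,j}^{2})\det(\Sigma_{A,A})$ with $A=[m]\setminus\{j-1,j\}$, where $\det(\Sigma_{A,A})>0$ since $A$ is a proper vertex subset inducing a path. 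Also $\det\widetilde\Sigma$ is the sum over matchings of $C_m$ that avoid the edge $\{j,j+1\}$, so $S=\det\widetilde\Sigma-\sigma_{j,j+1}^{2}\det(\Sigma_{[m]\setminus\{j,j+1\}})$ and hence $\det\widetilde\Sigma=S+\sigma_{j,j+1}^{2}\det(\Sigma_{[m]\setminus\{j,j+1\}})\ge S\ge0$. Thus $\sigma_{jj}d_{j-1}-\sigma_{j-1,j}^{2}\ge0$, contradicting the previous line. So no $u_j$ is negative, and combined with (a) this shows every complex solution is real. The main difficulty is exactly this last step: locating the right principal submatrices together with the identity $\det\widetilde\Sigma=S+\sigma_{j,j+1}^{2}\det(\Sigma_{[m]\setminus\{j,j+1\}})$, so that positive definiteness and the inequality of Theorem~\ref{thm:semi-alg-cycle} combine to rule out a negative $u_j$.
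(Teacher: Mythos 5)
Your proof is correct, but it takes a genuinely different route from the paper's. The paper borders $\Gamma(\gamma)$ with an extra row to force a rank condition, obtaining a quartic $a\gamma_{12}^4+b\gamma_{12}^2+c=0$ whose coefficients are signed principal minors of $\Sigma$; reality then follows because $b^2-4ac=\det(\Sigma)\det(\Sigma^{(12)})>0$ (proved by explicit determinant expansions plus a tridiagonal identity established by induction) together with $a<0$, $c\le 0$, $b>0$, which immediately makes both roots of the quadratic in $\gamma_{12}^2$ real and nonnegative. You instead eliminate down to the cyclic Riccati recursion for $u_i=\gamma_{i,i+1}^2$ and read it as a fixed-point problem for the product $M$ of $2\times 2$ transfer matrices. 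Your discriminant computation $(\operatorname{tr}M)^2-4\det M=S^2-4\prod\sigma_{i,i+1}^2=\det(\Sigma)\det(\Sigma^{(12)})$ is the same identity as the paper's $b^2-4ac$ formula, but obtained far more cheaply from $\operatorname{tr}M=S$ and $\det M=\prod\sigma_{i-1,i}^2$ — this is the main thing your approach buys, replacing the paper's inductive tridiagonal identity (\ref{eq:tridiag-property-for-discriminant}). Conversely, your step (b) is where you pay: the paper gets positivity of $\gamma_{12}^2$ for free from the signs of $a,b,c$, whereas you need the Cholesky-pivot comparison $u_{j+k}\ge d_{j+k}$ and the identity $\det\widetilde\Sigma=S+\sigma_{j,j+1}^2\det(\Sigma_{[m]\setminus\{j,j+1\}})\ge S\ge 0$; I checked this chain and it is sound.

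Two small points to tighten. First, the fixed-point equation $ct^2+(d-a)t-b=0$ is only guaranteed to have real roots if it is a genuine quadratic; you should note that the $(2,1)$ entry of $M$ equals $\det(\Sigma_{[m]\setminus\{m\},[m]\setminus\{m\}})>0$ by positive definiteness, so $M$ is not a scalar matrix and every finite fixed point is a root of this nondegenerate real quadratic. Second, your opening reduction to $\sigma_{i,i+1}\ne 0$ proves a weaker statement than the lemma as written: the paper's proof covers $\sigma_{12}=0$ (the quartic then has $c=0$ and roots $0$ and $-b/a>0$), and the paper later invokes the lemma in exactly that degenerate case when counting fiber elements. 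For the sufficiency direction of Theorem~\ref{thm:semi-alg-cycle} your restriction is harmless, but as a proof of the lemma itself you would need to handle the case where some $\sigma_{i,i+1}$ vanishes, where your recursion in the $u_i$ breaks down.
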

\begin{proof}
  Suppose the vector $\gamma\in(\mathbb{C}^2)^E$ provides a
  solution to
  (\ref{eq:cycle-eqns1-no-diag})-(\ref{eq:cycle-eqns2-no-diag}).  Fill
  the $2m$ unknowns in a matrix $\Gamma(\gamma)\in\mathbb{C}^{V\times
  E}\simeq \mathbb{C}^{m\times m}$
  as in (\ref{eq:def-Gamma-gamma}).  Note that we are setting $\gamma_{i,\{i\}} = 0$ for all $i = 1,\dots,m$, so the columns in $\Gamma(\gamma)$ corresponding to the singleton faces $\{1\}, \dots, \{m\}$ are zero and can be omitted.  
  Augment $\Gamma(\gamma)$ to an $(m+1)\times
  m$ matrix $\Gamma_{(12)}$ by adding the vector $(\gamma_{12},0,\dots,0)$
  as a first row.  Based on the equations
  (\ref{eq:cycle-eqns1-no-diag})-(\ref{eq:cycle-eqns2-no-diag}),
  \begin{equation}
    \label{eq:quartic-matrix}
    \Gamma_{(12)}\Gamma_{(12)}^T = 
    \begin{pmatrix}
      \gamma_{12}^2 & \gamma_{12}^2 & \sigma_{12} & \\
      \gamma_{12}^2 & \sigma_{11} & \sigma_{12} &  & \sigma_{1m}\\
      \sigma_{12} & \sigma_{12} & \sigma_{22} & \sigma_{23} &\\
      && \sigma_{23} & \sigma_{33} & \ddots\\
      &&&\ddots & \ddots & \sigma_{m-1,m}\\
      & \sigma_{1m} &&  & \sigma_{m-1,m} & \sigma_{mm}
    \end{pmatrix}
  \end{equation}
  with blank entries being zero.  The $(m+1)\times (m+1)$ matrix
  $\Gamma_{(12)}\Gamma_{(12)}^T$ has rank at most $m$, and thus its
  determinant vanishes.  Therefore, $\gamma_{12}$ has to satisfy the
  quartic equation
  \begin{equation}
    \label{eq:quartic}
    \det\left(\Gamma_{(12)}\Gamma_{(12)}^T\right) = 
    a\gamma_{12}^4 + b\gamma_{12}^2+c = 0.
  \end{equation}
  By expanding the determinant of (\ref{eq:quartic-matrix}) along the first
  row (or column), the coefficients in (\ref{eq:quartic}) are
  found to be:
  \begin{align*}
    a &=-\det\left(\Sigma_{[m]\setminus\{1\},[m]\setminus\{1\}}\right),\\
    b &=  \det(\Sigma) +
    2\sigma_{12}^2
    \det\left(\Sigma_{[m]\setminus\{1,2\},[m]\setminus\{1,2\}}\right) 
    +(-1)^m\cdot 2\prod_{i=1}^m \sigma_{i-1,i},\\ 
    c&= -\sigma_{12}^2
    \det\left(\Sigma_{[m]\setminus\{2\},[m]\setminus\{2\}}\right).
  \end{align*}
  Note that the third term in $b$ cancels out a term in $\det(\Sigma)$;
  recall~(\ref{eq:detSigma-expansion}).  It remains to argue that under the
  assumption on $\Sigma$, the quartic in (\ref{eq:quartic}) has only real
  solutions.  Define $\Sigma^{(12)}$ by negating $\sigma_{12}$ as in
  Lemma~\ref{lem:negating}.  Then the assumption on $\Sigma$ implies that
  both $\det(\Sigma)$ and $\det(\Sigma^{(12)})$ are positive.
 
  First, we claim that the discriminant $b^2-4ac$ is positive because
  \begin{equation}
    \label{eq:discr}
    b^2-4ac = \det(\Sigma)\det(\Sigma^{(12)}).
  \end{equation}
  Since
  \begin{equation}
    \label{eq:sigma-sigmaneg}
  \det(\Sigma)+(-1)^m\cdot 2\prod_{i=1}^m \sigma_{i-1,i} = 
  \det(\Sigma^{(12)})-(-1)^m\cdot 2\prod_{i=1}^m \sigma_{i-1,i} 
  \end{equation}
  we can write
  \begin{multline*}
    b^2 = \left[\det(\Sigma) + 2\sigma_{12}^2
      \det\left(\Sigma_{[m]\setminus\{1,2\},[m]\setminus\{1,2\}}\right)
      +(-1)^m\cdot 2\prod_{i=1}^m \sigma_{i-1,i}\right]\times\\
    \left[\det(\Sigma^{(12)}) + 2\sigma_{12}^2
      \det\left(\Sigma_{[m]\setminus\{1,2\},[m]\setminus\{1,2\}}\right)
      -(-1)^m\cdot 2\prod_{i=1}^m \sigma_{i-1,i} \right].
  \end{multline*}
  Multiplying out the product, and using~(\ref{eq:sigma-sigmaneg}) once
  more, yields that
  \begin{multline}
     \label{eq:b2-simplified}
     b^2 = 
     \det(\Sigma)\det(\Sigma^{(12)}) +
     4\sigma_{12}^4
     \det(\Sigma_{[m]\setminus\{1,2\},[m]\setminus\{1,2\}})^2
     + 4\prod_{i=1}^m \sigma_{i-1,i}^2+\\
     2\sigma_{12}^2\det(\Sigma_{[m]\setminus\{1,2\},[m]\setminus\{1,2\}})
     \left(\det(\Sigma)+\det(\Sigma^{(12)}) \right).
  \end{multline}
  Expansion of determinants shows that
  \begin{multline*}
  \det(\Sigma) = 
  \sigma_{11} \det(\Sigma_{[m]\setminus \{1\},[m]\setminus \{1\}})
  -\sigma_{12}^2\det(\Sigma_{[m]\setminus \{1,2\},[m]\setminus \{1,2\}})\\
  -\sigma_{1m}^2\det(\Sigma_{[m-1]\setminus \{1\},[m-1]\setminus \{1\}})
  -(-1)^m\cdot 2\prod_{i=1}^m \sigma_{i-1,i}.
  \end{multline*}
  Hence,
  \begin{multline}
    \label{eq:expansion1}
    \det(\Sigma)+\det(\Sigma^{(12)}) = 
    2\sigma_{11} \det(\Sigma_{[m]\setminus \{1\},[m]\setminus \{1\}})\\
    -2\sigma_{12}^2\det(\Sigma_{[m]\setminus \{1,2\},[m]\setminus \{1,2\}})
    -2\sigma_{1m}^2\det(\Sigma_{[m-1]\setminus \{1\},[m-1]\setminus \{1\}}).
  \end{multline}
  Moreover, we find from another expansion that
  \begin{multline}
    \label{eq:expansion2}
    \det(\Sigma_{[m]\setminus\{2\},[m]\setminus\{2\}})= \\
    \sigma_{11}\det(\Sigma_{[m]\setminus\{1,2\},[m]\setminus\{1,2\}})
    +\sigma_{1m}^2\det(\Sigma_{[m-1]\setminus\{1,2\},[m-1]\setminus\{1,2\}}).   
  \end{multline}
  Combining (\ref{eq:b2-simplified}), (\ref{eq:expansion1}) and
  (\ref{eq:expansion2}), we obtain that our claim (\ref{eq:discr}) holds if
  \begin{multline}
    \label{eq:tridiag-property-for-discriminant}
    \sigma_{12}^2\sigma_{1m}^2\det(\Sigma_{[m]\setminus\{1\},[m]\setminus\{1\}})
    \det(\Sigma_{[m-1]\setminus\{1,2\},[m-1]\setminus\{1,2\}}) =\\
    \sigma_{12}^2\sigma_{1m}^2
    \det(\Sigma_{
      [m]\setminus\{1,2\},[m]\setminus\{1,2\}})
    \det(\Sigma_{[m-1]\setminus \{1\},[m-1]\setminus \{1\}}) -
    \prod_{i=1}^m \sigma_{i-1,i}^2 .
  \end{multline}
  However, all determinants appearing in
  (\ref{eq:tridiag-property-for-discriminant}) are determinants of
  tridiagonal matrices and thus
  (\ref{eq:tridiag-property-for-discriminant}) can be shown to hold by an
  induction on $m$.
  
  As just established, $b^2-4ac>0$.  Moreover, since we assume that both
  $\det(\Sigma)$ and $\det(\Sigma^{(12)})$ are positive it holds that
  $b>0$; see e.g.~(\ref{eq:b2-simplified}).  In addition, $a$ and $c$ are
  both negative, and it follows from the usual formula for the solutions of
  a quadratic equation that all four solutions to the quartic equation in
  (\ref{eq:quartic}) are real.  Hence, $\gamma_{12}$ is real and, by
  symmetry, the same is true for all other components of a solution to
  (\ref{eq:cycle-eqns1-no-diag})-(\ref{eq:cycle-eqns2-no-diag}).
\end{proof}

In the final step that completes the proof of sufficiency of the
condition in Theorem~\ref{thm:semi-alg-cycle}, we show that for a
generic choice of $\Sigma$, the
equations~(\ref{eq:cycle-eqns1-no-diag})-(\ref{eq:cycle-eqns2-no-diag})
indeed have at least one complex solution.  We are able to restrict
attention to generic choices of the entries of $\Sigma$ because
$\im(\phi_{C_m})$ is closed and full-dimensional in $\SSS_{\succeq
  0}(C_m)$.  In particular, we may assume that $\sigma_{i-1,i}\not=0$
for all $i\in[m]$.  This implies that a solution of
(\ref{eq:cycle-eqns1-no-diag})-(\ref{eq:cycle-eqns2-no-diag}) has all
components non-zero.  Moreover, the solution set of
(\ref{eq:cycle-eqns1-no-diag})-(\ref{eq:cycle-eqns2-no-diag}) is
identical, up to sign, to that of the rational system
\[
\gamma_{i,i+1}^2+\frac{\sigma_{i-1,i}^2}{\gamma_{i-1,i}^2}=\sigma_{ii},\quad
i=1,\dots,m.
\]
Or simpler yet, setting $x_i=\gamma_{i-1,i}$ (in particular, $x_0\equiv
x_m$), the solution set corresponds exactly to the solution set of the polynomial
system
\begin{equation}
  \label{eq:equations-torus}
  x_{i+1}^2x_{i}^2-\sigma_{ii}x_i^2+\sigma_{i-1,i}^2 = 0,
  \quad i=1,\dots,m,
\end{equation}
in the torus $(\mathbb{C}^*)^m=(\mathbb{C}\setminus\{0\})^m$.

\begin{lemma}
  \label{lem:bernstein}
  For generic choices of the coefficients $\sigma_{ij}$, the equations
  (\ref{eq:equations-torus}) have $2^{m+1}$ solutions in the torus
  $(\mathbb{C}^*)^m$.
\end{lemma}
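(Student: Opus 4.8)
The plan is to compute the number of solutions in the torus $(\mathbb C^*)^m$ by applying Bernstein's theorem (the BKK bound), which states that for a generic system of $m$ Laurent polynomial equations in $m$ variables, the number of solutions in $(\mathbb C^*)^m$ equals the mixed volume of the Newton polytopes of the equations. So the first step is to identify, for each $i=1,\dots,m$, the Newton polytope $P_i$ of the polynomial $f_i = x_{i+1}^2 x_i^2 - \sigma_{ii} x_i^2 + \sigma_{i-1,i}^2$, where indices are read modulo $m$. Writing $e_1,\dots,e_m$ for the standard basis of $\mathbb R^m$, the three monomials contribute the lattice points $2e_i + 2e_{i+1}$, $2e_i$, and $0$, so $P_i$ is the triangle with these three vertices; it lies in the two-dimensional plane spanned by $e_i$ and $e_{i+1}$ and is a scaled copy ($2\times$) of the standard triangle there. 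The claim then reduces to showing that the mixed volume $\mathrm{MV}(P_1,\dots,P_m)$ equals $2^{m+1}$, after which genericity of the $\sigma_{ij}$ guarantees the exact count $2^{m+1}$ (rather than merely an upper bound).

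The main work is the mixed volume computation. Since each $P_i$ is $2$ times the standard triangle $\mathrm{conv}(0, e_i, e_{i+1})$, multilinearity of mixed volume gives $\mathrm{MV}(P_1,\dots,P_m) = 2^m \cdot \mathrm{MV}(Q_1,\dots,Q_m)$ with $Q_i = \mathrm{conv}(0,e_i,e_{i+1})$, so it remains to show $\mathrm{MV}(Q_1,\dots,Q_m) = 2$. One clean way to evaluate this is to use the combinatorial interpretation of mixed volume as counting generic intersections, or to use the formula $\mathrm{MV}(Q_1,\dots,Q_m) = \sum_{\emptyset\neq S\subseteq[m]} (-1)^{m-|S|}\,\mathrm{vol}\bigl(\sum_{i\in S} Q_i\bigr)$. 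The polytope $\sum_{i\in S} Q_i$ is a Minkowski sum of edges-of-triangles supported on the cycle graph $C_m$; its volume can be analyzed by noting that the edge graph structure (a disjoint union of paths once $S$ is fixed) makes each summand a product of lower-dimensional ``staircase'' polytopes whose volumes are easy to write down. Alternatively, and more in the spirit of algebraic geometry, one can directly count solutions of a generic system with these Newton polytopes: take $f_i = x_{i+1}^2 x_i^2 - a_i x_i^2 + b_i$ and solve the chain of quadratics in $x_{i+1}^2$ in terms of $x_i^2$, tracking how the square roots propagate around the cycle — this yields a degree-$2^m$ condition from going around the loop, with an extra factor of $2$ from the final sign consistency, giving $2^{m+1}$; I would use this as a sanity check and present whichever derivation is cleanest.

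I expect the main obstacle to be the bookkeeping in the mixed volume / solution count around the cycle: the cyclic structure means the ``propagate the square root'' argument does not terminate the way it would for a path, and one must carefully verify that exactly $2^{m+1}$ (not $2^m$ or $2^{m+2}$) solutions survive after imposing consistency around the loop, and that none of them accidentally leave the torus for generic coefficients. I would handle the first issue by substituting $y_i = x_i^2$ to get a system $y_{i+1} y_i - \sigma_{ii} y_i + \sigma_{i-1,i}^2 = 0$ whose generic solution count in $(\mathbb C^*)^m$ I compute first (it should be $2^m$ by a transfer-matrix / resultant argument eliminating $y_2,\dots,y_m$ and obtaining a degree-$2^m$ polynomial in $y_1$ whose constant term is nonzero generically), and then account for the $2^m$ choices of sign $x_i = \pm\sqrt{y_i}$ subject to no additional constraint, which would naively give $2^{2m}$; the resolution is that the original variables $x_i$ appear in $f_i$ only through $x_i^2$ and $x_{i+1}^2 x_i^2$, so in fact the $f_i$ depend on the $x_i$ only through $x_i^2$, meaning the solution set in $x$-space is exactly $2^m$ copies of the $y$-solution set — which would give $2^m \cdot 2^m$, contradicting the claim. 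This tension tells me the correct reading is that the Newton polytope of $f_i$ in the $x_i$ is genuinely two-dimensional in the $(e_i,e_{i+1})$-plane (not supported on $2\mathbb Z^m$ after a change of lattice), so Bernstein's theorem must be applied directly to the $x$-system rather than reduced to the $y$-system, and the factor $2^{m+1}$ comes out of the mixed volume computation $2^m \cdot \mathrm{MV}(Q_1,\dots,Q_m)$ with $\mathrm{MV}(Q_1,\dots,Q_m)=2$; carefully resolving this subtlety and confirming the mixed volume of the cyclic family of triangles is exactly $2$ is the crux of the argument.
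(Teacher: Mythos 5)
Your overall strategy (Bernstein/BKK) is viable --- indeed the acknowledgments indicate it was the authors' original route before a referee suggested the simpler argument now in the paper, which composes the M\"obius transformations $x_i^2 \mapsto x_{i+1}^2 = \sigma_{ii} - \sigma_{i-1,i}^2/x_i^2$ around the cycle, obtains a quartic fixed-point equation for $x_1$, and certifies genericity by the explicit specialization $\sigma_{kk}=1$, $\sigma_{k-1,k}^2=-1$ (yielding $x_1^4-x_1^2-1=0$ via Fibonacci numbers). But as written your proposal contains a step that fails and leaves the crux unproved. The Newton polytope of $f_i$ is $\mathrm{conv}(0,\,2e_i,\,2e_i+2e_{i+1}) = 2\,\mathrm{conv}(0,\,e_i,\,e_i+e_{i+1})$, \emph{not} $2\,\mathrm{conv}(0,e_i,e_{i+1})$: each triangle is individually unimodularly equivalent to the standard one, but mixed volume is not invariant under applying different unimodular maps to different members of the tuple. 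With your $Q_i=\mathrm{conv}(0,e_i,e_{i+1})$ the associated generic system is linear, so $\mathrm{MV}(Q_1,\dots,Q_m)=1$ and your scaling argument would yield $2^m$, not $2^{m+1}$. With the correct $T_i=\mathrm{conv}(0,e_i,e_i+e_{i+1})$ one does get $\mathrm{MV}(T_1,\dots,T_m)=2$ (the generic system $a_iy_iy_{i+1}+b_iy_i+c_i=0$ is a cycle of M\"obius maps whose composite has a quadratic fixed-point equation), but you never carry out this computation; the inclusion--exclusion/staircase sketch is not executed, and it is exactly the step you yourself identify as the crux.

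The ``tension'' in your final paragraph rests on a miscount, and your proposed resolution is incorrect. The reduced system $y_iy_{i+1}-\sigma_{ii}y_i+\sigma_{i-1,i}^2=0$ does not have $2^m$ torus solutions: eliminating around the cycle composes degree-one M\"obius transformations, so the loop closure condition on $y_1$ is a quadratic, giving $2$ solutions (the naive resultant/B\'ezout bound $2^m$ is far from sharp here); likewise your ``sanity check'' producing a degree-$2^m$ loop condition is not sound. Since every $f_i$ depends on $x$ only through the $x_j^2$, the substitution $y_j=x_j^2$ is perfectly legitimate (the supports do lie in $2\mathbb{Z}^m$), and the $x$-count is $2^m\cdot 2 = 2^{m+1}$ --- there is no contradiction to resolve, and your claim that the lattice reduction is unavailable is false. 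Finally, if you invoke Bernstein you must also address that the coefficients in (\ref{eq:equations-torus}) are not fully generic (the coefficient of $x_i^2x_{i+1}^2$ is pinned to $1$), so the generic BKK count must be shown to be attained on this coefficient subfamily; the paper handles this by exhibiting an explicit witness, and you would need either such a specialization or a verification that no facial subsystem has a root in the torus.
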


\begin{proof}
  We can rewrite the equation system in (\ref{eq:equations-torus}) as
  $$
  x_{i+1}^2 = \sigma_{ii} - \frac{\sigma_{i-1,i}^2}{x_i^2},\quad i
  = 1,\dots,m.
  $$
  Solving the equation for $i=1$ for $x_2$, plugging the result
  into the equation for $i=2$ and solving for $x_3$, and continuing on
  in this fashion with all of the first $m-1$ equations, we can write
  for each $i = 2,\dots,m$,
  $$
  x_i^2 = \frac{a_i x_1^2 + b_i}{c_i x_1^2 + d_i},
  $$
  where $a_i,b_i,c_i,d_i$ are polynomials in the coefficients
  $\sigma_{kk},\sigma_{k-1,k}^2$.  From the last equation,
  we then obtain that
  \begin{equation}
    \label{eq:x1}
    x_1^2 = \frac{a x_1^2 + b}{c x_1^2 + d},
  \end{equation}
  where $a,b,c,d$ are again polynomials in the coefficients
  $\sigma_{kk},\sigma_{k-1,k}^2$.
  
  Now specialize to the case of all $\sigma_{kk} = 1$ and all
  $\sigma_{k-1,k}^2 = -1$.  Then we get 
  \[
  x_2^2 = \frac{x_1^2 + 1}{x_1^2}, \quad 
  x_3^2 = \frac{2x_1^2 + 1}{x_1^2+1}, \dots,
  \]
  and finally equation~(\ref{eq:x1}) becomes
  \begin{equation}
    \label{eq:x1-special}
    x_1^2 = \frac{F_{m+1} x_1^2 + F_{m}}{F_{m}
      x_1^2+F_{m-1}},
  \end{equation}
  where $F_m$ is the $m^\text{th}$ term of the Fibonacci sequence
  $1,1,2,3,\dots$.  Clearing denominators, (\ref{eq:x1-special})
  simplifies to $x_1^4 - x_1^2 - 1 = 0$.  This equation is obviously
  not identical to $x^4 =0$ or $x^2=0$, and its discriminant is not
  identically zero.  The coefficients in (\ref{eq:x1}) being
  polynomial, it follows that for generic complex numbers
  $\sigma_{kk},\sigma_{k-1,k}^2$, the equation in (\ref{eq:x1}) has two
  distinct non-zero solutions up to sign, and the system
  (\ref{eq:equations-torus}) has $2^{m+1}$ solutions.
\end{proof}

\begin{lemma}
  For any positive definite matrix $\Sigma = (\sigma_{ij})$ in image of
  $\phi_{C_m}$, the fiber $\{\gamma : \phi_{C_m}(\gamma) = \Sigma \mbox{
    and } \gamma_{i,\{i\}} = 0 \mbox{ for all } i = 1,\dots,m \}$ consists
  of exactly $2^{m+1}$ elements, or two elements up to sign.
\end{lemma}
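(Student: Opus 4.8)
The plan is to combine Lemma~\ref{lem:quartic}, Lemma~\ref{lem:bernstein}, and the reduction leading to the polynomial system~(\ref{eq:equations-torus}), and then pass from a generic positive definite $\Sigma$ to an arbitrary one by a degeneration argument. First I would recall that for $\Sigma$ positive definite lying in $\im(\phi_{C_m})$, the condition of Theorem~\ref{thm:semi-alg-cycle} holds; also, since $\Sigma\in\im(\phi_{C_m})$ forces $\det(\Sigma)\ge 0$ and, by Lemma~\ref{lem:negating}, $\det(\Sigma^{(i,i+1)})\ge 0$ for every edge, a generic such $\Sigma$ has all off-diagonal entries $\sigma_{i-1,i}\not=0$. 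Under this nonvanishing assumption the reduction described before Lemma~\ref{lem:bernstein} applies: setting $x_i=\gamma_{i-1,i}$ and discarding the singleton unknowns $\gamma_{i,\{i\}}$, the fiber in question is in bijection (and in particular equinumerous) with the set of torus solutions of~(\ref{eq:equations-torus}), where one recovers $\gamma_{i,i+1}=\sigma_{i-1,i}/x_i$ from $x_i$. By Lemma~\ref{lem:bernstein}, for generic $\sigma_{ij}$ this system has exactly $2^{m+1}$ solutions in $(\mathbb{C}^*)^m$, and by Lemma~\ref{lem:quartic} every complex solution for a positive definite $\Sigma$ is real, hence each such solution yields a genuine point of the real fiber. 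So for generic positive definite $\Sigma\in\im(\phi_{C_m})$ the fiber has exactly $2^{m+1}$ elements.

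The next step is to upgrade ``generic'' to ``all positive definite $\Sigma$ in the image.'' Here the key is that Lemma~\ref{lem:quartic} already guarantees \emph{finiteness} and reality of the fiber for \emph{every} positive definite $\Sigma$ satisfying the Theorem's inequality (the quartic in $\gamma_{12}$, and by symmetry in each $\gamma_{i-1,i}$, has only real roots; once the $x_i=\gamma_{i-1,i}$ are pinned down, the remaining $\gamma_{i,i+1}=\sigma_{i-1,i}/x_i$ are determined, so the fiber is finite). Thus the fiber cardinality is an upper-semicontinuous, constructible function of $\Sigma$ over the (irreducible, full-dimensional) set of positive definite matrices in $\im(\phi_{C_m})$; I would argue that it is in fact \emph{constant} on the locus $\sigma_{i-1,i}\not=0$. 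The clean way is to observe that the projection from the incidence variety $\{(\gamma,\Sigma):\phi_{C_m}(\gamma)=\Sigma,\ \gamma_{i,\{i\}}=0\}$ (intersected with the torus $\{\prod x_i\not=0\}$) down to the $\Sigma$-space is finite and flat away from the branch locus: the equations~(\ref{eq:equations-torus}) depend on $\Sigma$ only through $\sigma_{ii}$ and $\sigma_{i-1,i}^2$, the generic fiber has $2^{m+1}$ points, and the only way the count can drop is if two solutions collide, i.e. the discriminant vanishes. But collision of solutions would force the quartic~(\ref{eq:quartic}) in some $\gamma_{i-1,i}$ to have a repeated root, contradicting $b^2-4ac=\det(\Sigma)\det(\Sigma^{(i-1,i)})>0$ established in the proof of Lemma~\ref{lem:quartic} for positive definite $\Sigma$. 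Hence no collision occurs and the count stays $2^{m+1}$.

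The last step handles the case $\sigma_{i-1,i}=0$ for some $i$ — but for $\Sigma$ positive definite and in the image, I claim this can be absorbed or dealt with directly: if some $\sigma_{i-1,i}=0$ then equation~(\ref{eq:cycle-eqns2}) forces $\gamma_{i-1,i}\gamma_{i,i-1}=0$, and one can solve the remaining system explicitly, checking that the solution count is again $2^{m+1}$ (each of the two vanishing choices times the solution count of a ``broken cycle,'' i.e. a path, which factors into independent quadratics); alternatively, one notes that the statement as used elsewhere is only needed generically and states the lemma for generic positive definite $\Sigma$. The main obstacle is the second step: making rigorous that the fiber cardinality does not drop below $2^{m+1}$ anywhere on the positive definite part of the image, rather than merely at a generic point. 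The cleanest route is the discriminant argument above — for every positive definite $\Sigma$ in the image, the quartic discriminant $\det(\Sigma)\det(\Sigma^{(i-1,i)})$ is strictly positive, so all $2^{m+1}$ torus solutions remain distinct — and this is exactly the content already extracted in the proof of Lemma~\ref{lem:quartic}, so little new computation is required.
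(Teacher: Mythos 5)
Your treatment of the main case (all $\sigma_{i-1,i}\neq 0$) is essentially the paper's argument: reduce to the torus system \eqref{eq:equations-torus}, get $2^{m+1}$ solutions generically from Lemma~\ref{lem:bernstein}, get reality from Lemma~\ref{lem:quartic}, and rule out a drop in the count at special positive definite $\Sigma$ by the strict positivity of the discriminant $b^2-4ac=\det(\Sigma)\det(\Sigma^{(12)})$ of the quartic \eqref{eq:quartic}. The paper phrases this last step as an explicit limit of the $2^{m+1}$ radical expressions along a sequence of generic matrices, whereas you invoke finiteness/flatness of the incidence variety; these are morally the same, but your version as written assumes that solutions cannot escape the torus or go to infinity as $\Sigma$ degenerates (i.e.\ properness of the family over the locus $\sigma_{i-1,i}\neq 0$). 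That is true here --- the leading coefficient $a=-\det(\Sigma_{[m]\setminus\{1\},[m]\setminus\{1\}})$ of \eqref{eq:quartic} is nonzero for positive definite $\Sigma$, so the roots stay bounded, and $\sigma_{i-1,i}\neq 0$ keeps them off the coordinate hyperplanes --- but it needs to be said; ``the only way the count can drop is by collision'' is not automatic for a family of affine varieties.

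The genuine gap is the case $\sigma_{i-1,i}=0$ for some edge, which occupies more than half of the paper's proof and which you largely defer. Your fallback of restating the lemma for generic $\Sigma$ does not prove the lemma as stated, and your sketch (``two vanishing choices times the solution count of a broken cycle'') omits the one place where positive definiteness is actually used in this case: after setting, say, $\gamma_{21}=0$ and propagating around the cycle via \eqref{eq:cycle-eqns1-no-diag}--\eqref{eq:cycle-eqns2-no-diag}, one must show that no intermediate $\gamma_{i,i+1}$ vanishes, since a vanishing there would break the chain of substitutions and change the count. The paper rules this out by observing that $\gamma_{i,i+1}=0$ would exhibit a principal submatrix of $\Sigma$ as $\Gamma\Gamma^T$ for a $\Gamma$ with more rows than nonzero columns, contradicting positive definiteness. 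One also needs that the two families $\{\gamma_{12}=0\}$ and $\{\gamma_{21}=0\}$ are disjoint (both vanishing would make $\Sigma$ singular), so that the counts $2^m+2^m$ add up to exactly $2^{m+1}$ without overcounting. Without these two verifications the degenerate case is not established.
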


\begin{proof}
  First consider the case when all $\sigma_{i,i+1}$ are non-zero.
  Since the elements of the fiber are in bijection with the solutions
  of (\ref{eq:equations-torus}) under setting $x_{i} =
  \gamma_{i-1,i}$, it suffices to show that (\ref{eq:equations-torus})
  has $2^{m+1}$ solutions.  For generic $\Sigma$, this was done in
  Lemma \ref{lem:bernstein}.  For an arbitrary $\Sigma$, let
  $\Sigma_n$ be a sequence of generic matrices in $\im(\phi_{C_m})$
  that converges to $\Sigma$.  Each of the $2^{m+1}$ solutions for
  $\Sigma_n$ can be expressed in terms of radicals using
  (\ref{eq:quartic}) and (\ref{eq:equations-torus}), and each of these
  $2^{m+1}$ sequences converge to solutions of
  (\ref{eq:equations-torus}) for $\Sigma$ by continuity.  Moreover the
  limit points are distinct because the discriminant $b^2-4ac$ in
  (\ref{eq:quartic}) is positive for $\Sigma$ as shown the proof of
  Lemma \ref{lem:quartic} and all $x_i$ in a solution must be
  non-zero.
 
  Now suppose $\sigma_{12}=0$.  In this case Lemma \ref{lem:quartic}
  still holds but Lemma \ref{lem:bernstein} does not apply.  We will
  now show that the system
  (\ref{eq:cycle-eqns1-no-diag})-(\ref{eq:cycle-eqns2-no-diag}) still
  has $2^{m+1}$ complex solutions.  Since $\sigma_{12} = 0$, we have
  either $\gamma_{12} = 0$ or $\gamma_{21} = 0$.  If $\gamma_{21} =
  0$, then we can determine $\gamma_{23}$, $\gamma_{32}$,
  $\gamma_{34}$, $\gamma_{43}$, $\dots$, $\gamma_{1,m}, \gamma_{12}$
  in that order along the cycle, using (\ref{eq:cycle-eqns1-no-diag})
  and (\ref{eq:cycle-eqns2-no-diag}) alternatingly.  These equations
  show that the sequence of $\gamma_{ij}$ obtained this way is unique
  up to sign unless we get $\gamma_{i,i+1} = 0$ for some $i$.
  However, if $\gamma_{i,i+1} = 0$, then the principal submatrix of
  $\Sigma$ indexed by $\{2, 3, \dots, i\}$ would be equal to
  $\Gamma(\gamma) \Gamma(\gamma)^T$ where $\Gamma(\gamma)$ is defined
  as in the introduction for the subgraph on vertices $2, 3, \dots, i$
  and edges $\{2,3\}, \dots, \{i-1,i\}$. Then $ \Gamma(\gamma)$ would
  have more rows than non-zero columns since $\gamma_{i,\{i\}} = 0$
  for all $i$, so $\Gamma(\gamma) \Gamma(\gamma)^T$ would not have
  full rank, contradicting the hypothesis that $\Sigma$ is positive
  definite.  Hence $\gamma_{i,i+1} \neq 0$ for all $i$, and setting
  $\gamma_{21} = 0$ determines all other $\gamma_{ij}$ up to sign.
  There are two choices of signs for each pair $\gamma_{i,i+1}$ and $
  \gamma_{i+1,i}$, even if $\sigma_{i,i+1} = 0$, so there are $2^m$
  solutions with $\gamma_{21} = 0$.
  
  By symmetry, if $\gamma_{12} = 0$, then we can determine
  $\gamma_{1,m}, \gamma_{m,1}, \gamma_{m,m-1}, \dots, \gamma_{23},
  \gamma_{21}$ in that order (going around the cycle in the other
  direction, with $\gamma_{i,i-1} \neq 0$ in this case), so there are
  $2^m$ solutions when $\gamma_{12} = 0$ also.  We cannot have both
  $\gamma_{21} = 0$ and $\gamma_{12} = 0$ because that would imply
  that $\Gamma(\gamma) \Gamma(\gamma)^T$ is singular.  So there are
  $2^{m+1}$ distinct solutions (two solutions up to sign) to the
  system (\ref{eq:cycle-eqns1-no-diag})-(\ref{eq:cycle-eqns2-no-diag})
  when $\sigma_{12} = 0$.  By symmetry, there are $2^{m+1}$ solutions
  for every $\Sigma \in \im(\phi_{C_m})$.
\end{proof}

\section{Statistical models and bipartite acyclic digraphs}
\label{sec:statistical-models}

In probability theory, positive semidefinite matrices arise as
covariance matrices of random vectors.  When the random vector
$Y=(Y_1,\dots,Y_m)$ is Gaussian (has a multivariate normal
distribution) with covariance matrix $\Sigma=(\sigma_{ij})$, then
$\sigma_{ij}=0$ is equivalent to the stochastic independence of the
two random variables $Y_i$ and $Y_j$.  Hence, the convex cone
$\SSS_{\succeq 0}^m(G)$ of positive semidefinite matrices with zeros
at the non-edges of a graph $G$ collects all covariance matrices for
which the components of $Y$ exhibit a pattern of independences.

For a simplicial complex $\Delta$ on $[m]$ with underlying graph $G$,
the map $\phi_\Delta$ traces out a full-dimensional subset of
$\SSS_{\succeq 0}^m(G)$.  This subset arises quite naturally for
random vectors whose components are linear combinations of a set of
independent random variables.  We review this construction next.

Let $\Delta_2$ be the set of all faces in $\Delta$ that have
cardinality at least two.  Introduce the random variables
$\varepsilon_i$, $i\in[m]$, and $H_F$, $F\in\Delta_2$.  Suppose the
random variables $H_F$ are mutually independent with a standard normal
distribution, denoted $\mathcal{N}(0,1)$.  Suppose further that the
$\varepsilon_i$ are mutually independent, independent of the $H_F$,
and distributed as
$\varepsilon_i\sim\mathcal{N}(0,\gamma_{i,\{i\}}^2)$ where
$\gamma_{i,\{i\}}^2$ is the variance.  Define new random variables
$Y_1,\dots,Y_m$ as linear combinations:
\begin{equation}
  \label{eq:def-rv-Y}
  Y_i = \sum_{F\in\Delta_2:i\in F} \gamma_{i,F} H_F + \varepsilon_i,
\qquad i\in[m].
\end{equation}

\begin{proposition}
  \label{prop:hidden-margin}
  The random vector $Y=(Y_1,\dots,Y_m)$ defined by (\ref{eq:def-rv-Y})
  has the positive semidefinite matrix $\phi_\Delta(\gamma)$ as
  covariance matrix.
\end{proposition}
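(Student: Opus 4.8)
The plan is to compute the covariance matrix of $Y$ directly from the definition (\ref{eq:def-rv-Y}) using bilinearity of covariance together with the stated independence assumptions, and then match the resulting expression term-by-term with the coordinate formula (\ref{eq:phi-coords}). First I would record that $\operatorname{Cov}(H_F,H_{F'})=\delta_{FF'}$ for $F,F'\in\Delta_2$, that $\operatorname{Cov}(\varepsilon_i,\varepsilon_j)=\delta_{ij}\gamma_{i,\{i\}}^2$, and that $\operatorname{Cov}(H_F,\varepsilon_i)=0$ for all $F\in\Delta_2$ and $i\in[m]$; all of these are immediate from the hypotheses that the $H_F$ are i.i.d.\ $\mathcal{N}(0,1)$, the $\varepsilon_i$ are independent $\mathcal{N}(0,\gamma_{i,\{i\}}^2)$, and the two families are mutually independent. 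Since each $Y_i$ is a finite linear combination of jointly normal variables, $Y$ is jointly Gaussian and its covariance matrix is well defined.

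Next I would expand
\[
\operatorname{Cov}(Y_i,Y_j)
= \operatorname{Cov}\!\Big(\sum_{F\in\Delta_2:\,i\in F}\gamma_{i,F}H_F+\varepsilon_i,\;
\sum_{F'\in\Delta_2:\,j\in F'}\gamma_{j,F'}H_{F'}+\varepsilon_j\Big).
\]
Distributing the bilinear form, the cross terms between an $H_F$ and an $\varepsilon_{j}$ (or $\varepsilon_i$ and an $H_{F'}$) vanish, so only two kinds of terms survive: the double sum $\sum_{F:i\in F}\sum_{F':j\in F'}\gamma_{i,F}\gamma_{j,F'}\operatorname{Cov}(H_F,H_{F'})$, which by orthonormality of the $H_F$ collapses to $\sum_{F\in\Delta_2:\,i,j\in F}\gamma_{i,F}\gamma_{j,F}$; and the term $\operatorname{Cov}(\varepsilon_i,\varepsilon_j)$, which equals $\gamma_{i,\{i\}}^2$ when $i=j$ and $0$ otherwise. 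For $i=j$ the singleton face $\{i\}$ is exactly accounted for by the $\varepsilon_i$ term, so combining the two contributions gives $\operatorname{Cov}(Y_i,Y_i)=\gamma_{i,\{i\}}^2+\sum_{F\in\Delta_2:\,i\in F}\gamma_{i,F}^2=\sum_{F\in\Delta:\,i\in F}\gamma_{i,F}^2$, while for $i\neq j$ no singleton face contains both $i$ and $j$, so $\operatorname{Cov}(Y_i,Y_j)=\sum_{F\in\Delta:\,i,j\in F}\gamma_{i,F}\gamma_{j,F}$. In both cases this is precisely $\phi_\Delta(\gamma)_{ij}$ as given in (\ref{eq:phi-coords}), and since $Y$ has mean zero this covariance matrix is $\phi_\Delta(\gamma)=\Gamma(\gamma)\Gamma(\gamma)^T$, which is positive semidefinite by construction.

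There is essentially no serious obstacle here: the argument is a routine covariance computation, and the only thing requiring a moment's care is the bookkeeping that merges the singleton faces $\{i\}$ (handled by the $\varepsilon_i$) with the faces in $\Delta_2$ so that the final sum ranges over all faces of $\Delta$ containing the relevant indices, matching the conventions of (\ref{eq:phi-coords}) and (\ref{eq:phi-coords-diag}). One may alternatively phrase the whole computation in matrix form: writing $Y=\Gamma(\gamma)Z$ where $Z$ stacks the $H_F$ (for $F\in\Delta_2$) and rescaled $\varepsilon_i$ in the coordinates indexed by the faces of $\Delta$, one has $\operatorname{Cov}(Z)=I$ by the independence hypotheses, whence $\operatorname{Cov}(Y)=\Gamma(\gamma)\operatorname{Cov}(Z)\Gamma(\gamma)^T=\Gamma(\gamma)\Gamma(\gamma)^T=\phi_\Delta(\gamma)$; I would likely present this slicker version as the main line and leave the entrywise check as the verification that $\Gamma(\gamma)$ is indeed the matrix of~(\ref{eq:def-Gamma-gamma}).
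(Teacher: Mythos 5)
Your proof is correct, but it takes a more elementary route than the paper. You compute $\operatorname{Cov}(Y_i,Y_j)$ directly by bilinearity and the independence assumptions, and your bookkeeping is right: for $i\neq j$ every face containing both indices lies in $\Delta_2$, and for $i=j$ the singleton face $\{i\}$ contributes exactly the variance $\gamma_{i,\{i\}}^2$ of $\varepsilon_i$, so the totals match (\ref{eq:phi-coords}) and (\ref{eq:phi-coords-diag}); your matrix reformulation $Y=\Gamma(\gamma)Z$ with $\operatorname{Cov}(Z)=I$ is also fine (with the harmless degenerate case $\gamma_{i,\{i\}}=0$, where the $\varepsilon_i$ term simply vanishes). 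The paper instead phrases (\ref{eq:def-rv-Y}) as a linear structural equation system: it forms the joint vector $(\varepsilon,H)$ with block-diagonal covariance $\Omega$, writes $\Lambda(Y,H)^T=(\varepsilon,H)^T$ for the unipotent matrix $\Lambda$ in (\ref{eq:digraph-covmatrices}), and reads off the covariance of $(Y,H)$ as $\Lambda^{-1}\Omega\Lambda^{-T}$, extracting the upper-left block. The two arguments are equivalent in content, and yours is shorter and self-contained; what the paper's formulation buys is the reusable $(\Omega,\Lambda,\Gamma_2)$ machinery, which is exactly what is needed again in Proposition~\ref{prop:hidden-condition}, where the dual construction requires $\Lambda^{-T}\Omega^{-1}\Lambda^{-1}$ and a Schur complement rather than a direct entrywise computation.
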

\begin{proof}
  Write $I$ for the identity matrix (of the appropriate
  size).  Let $\varepsilon=(\varepsilon_1,\dots,\varepsilon_m)$ and
  $H=(H_F : F\in\Delta_2)$.  The concatenation $(\varepsilon,H)$
  is a random vector with the diagonal $|\Delta|\times |\Delta|$
  covariance matrix
  \begin{equation}
    \label{eq:Omega}
    \Omega =
    \begin{pmatrix}
      \diag(\gamma_{i,\{i\}}^2) & 0 \\
      0 & I  
    \end{pmatrix}.
  \end{equation}
  Let $\Gamma_2(\gamma)$ be the submatrix of $\Gamma(\gamma)$
  obtained by retaining only the columns corresponding to faces in
  $\Delta_2$; recall (\ref{eq:def-Gamma-gamma}). Define the
  $|\Delta|\times |\Delta|$ matrix
  \begin{equation}
    \label{eq:digraph-covmatrices}
    \Lambda =
    \begin{pmatrix}
      I & -\Gamma_2(\gamma) \\
      0 & I  
    \end{pmatrix}.
  \end{equation}
  Multiplying $\Lambda$ with $(Y,H)^T$ gives the vector
  $(\varepsilon,H)^T$.  By standard results about linear combinations
  of random variables, it follows that the Gaussian random vector
  $(Y,H)$ has covariance matrix $\Lambda^{-1}\Omega\Lambda^{-T}$.  The
  covariance matrix of $Y$ alone is the principal submatrix given by
  the first $m$ rows and columns of $\Lambda^{-1}\Omega\Lambda^{-T}$.
  The inverse $\Lambda^{-1}$ is obtained by negating the upper right
  block, which becomes simply $\Gamma_2(\gamma)$.  It follows that, as
  claimed,
  \[
  \diag(\gamma_{i,\{i\}}^2) + \Gamma_2(\gamma)\Gamma_2(\gamma)^T = 
  \Gamma(\gamma)\Gamma(\gamma)^T =
  \phi_\Delta(\gamma).  
  \qedhere
  \]
\end{proof}

In the field of graphical statistical modelling, it is customary to
visualize an equation system such as (\ref{eq:def-rv-Y}) by means of
an acyclic digraph; see for instance \cite[Chap.~3]{Drton:2009}.
Here, we draw the digraph $D_\Delta$ that has vertex set $\Delta$ and
the edges $F\to \{i\}$ for all pairs of an index $i\in [m]$ and a face
$F\in\Delta_2$ with $i\in F$.  Note that $D_\Delta$ is bipartite with
respect to the partitioning $\Delta=\Delta_1\cup\Delta_2$, where
$\Delta_1=\{ \{i\} : i\in [m]\}$ are the singleton faces and
$\Delta_2$ was defined above.  See Figure \ref{fig:bipartite} for an example.

\begin{figure}[t]
  \centering
  \vspace{0.3cm}
  \scalebox{0.5}{\includegraphics{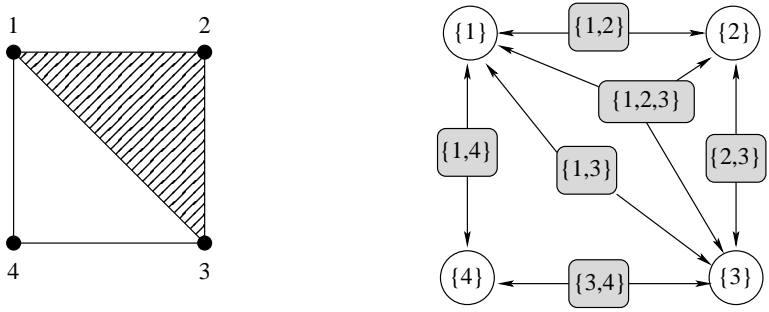}}
  \caption{A simplicial complex (left) and the acyclic bipartite
    digraph corresponding to it (right).}
    \label{fig:bipartite}
\end{figure}

In the setup of Proposition~\ref{prop:hidden-margin}, the random
variables $Y_i$ are functions of the hidden variables $H_F$, up to the
noise given by the $\varepsilon_i$.  There is a dual construction in
which the hidden variables are functions of the observed variables.
Suppose that the random variables $\bar Y_i$ are mutually independent
and normally distributed as $\bar
Y_i\sim\mathcal{N}(0,1/\gamma_{i,\{i\}}^2)$ with
$\gamma_{i,\{i\}}^2\not=0$.  Suppose further that $\nu_F$,
$F\in\Delta_2$, are mutually independent $\mathcal{N}(0,1)$ random
variables that are also independent of the $\bar Y_i$.  Define 
random variables $\bar H_F$ as linear combinations:
\begin{equation}
  \label{eq:def-rv-H}
  \bar H_F = \sum_{i\in [m]: i\in F} \gamma_{i,F} \bar Y_i + \nu_F, \qquad
  F\in\Delta_2.
\end{equation}
Note that this equation system is associated with the bipartite
acyclic digraph obtained by reversing the direction of all edges in
$D_\Delta$.

\begin{proposition}
  \label{prop:hidden-condition}
  If the random vector $\bar H=(\bar H_F: F\in\Delta_2)$ is
  defined by (\ref{eq:def-rv-H}), then the positive definite matrix
  $\phi_\Delta(\gamma)$ is the inverse of the covariance matrix of the
  conditional distribution of $\bar Y$ given $\bar H$.
\end{proposition}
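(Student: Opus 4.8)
The plan is to dualize the computation in the proof of Proposition~\ref{prop:hidden-margin} and then to invoke the elementary fact that, for a jointly Gaussian random vector, the concentration (inverse covariance) matrix of a conditional distribution is a principal submatrix of the joint concentration matrix.

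First I would put the system~(\ref{eq:def-rv-H}) in matrix form.  Let $\bar Y=(\bar Y_i:i\in[m])$, $\bar H=(\bar H_F:F\in\Delta_2)$, $\nu=(\nu_F:F\in\Delta_2)$, and let $\Gamma_2(\gamma)$ and $\Omega$ be as in the proof of Proposition~\ref{prop:hidden-margin} (see~(\ref{eq:Omega})).  The equations~(\ref{eq:def-rv-H}) state that $\bar H=\Gamma_2(\gamma)^T\bar Y+\nu$, that is,
\[
\Lambda^T\begin{pmatrix}\bar Y\\ \bar H\end{pmatrix}=\begin{pmatrix}\bar Y\\ \nu\end{pmatrix}
\]
with $\Lambda$ as in~(\ref{eq:digraph-covmatrices}); the appearance of $\Lambda^T$ rather than $\Lambda$ reflects the edge reversal that relates~(\ref{eq:def-rv-H}) to the digraph $D_\Delta$.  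Because $\gamma_{i,\{i\}}\neq 0$, the random vector $(\bar Y,\nu)$ has the invertible diagonal covariance matrix $\Omega^{-1}$.  By the standard transformation rule for Gaussian vectors, $(\bar Y,\bar H)$ then has concentration matrix $\Lambda\,\Omega\,\Lambda^T$ (equivalently, covariance matrix $\Lambda^{-T}\Omega^{-1}\Lambda^{-1}$).

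Next I would read off the blocks.  Carrying out the two-step multiplication exactly as in the proof of Proposition~\ref{prop:hidden-margin} gives
\[
\Lambda\,\Omega\,\Lambda^T=\begin{pmatrix}\diag(\gamma_{i,\{i\}}^2)+\Gamma_2(\gamma)\Gamma_2(\gamma)^T & -\Gamma_2(\gamma)\\ -\Gamma_2(\gamma)^T & I\end{pmatrix},
\]
whose upper-left block is $\diag(\gamma_{i,\{i\}}^2)+\Gamma_2(\gamma)\Gamma_2(\gamma)^T=\Gamma(\gamma)\Gamma(\gamma)^T=\phi_\Delta(\gamma)$.  Finally, for a Gaussian vector $(\bar Y,\bar H)$ with concentration matrix $K$ partitioned conformally into blocks, the conditional law of $\bar Y$ given $\bar H$ is Gaussian with concentration matrix equal to the $(\bar Y,\bar Y)$-block of $K$; applying this yields that $\bar Y\mid\bar H$ has concentration matrix $\phi_\Delta(\gamma)$, so its covariance matrix is $\phi_\Delta(\gamma)^{-1}$, which is the assertion.

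I do not expect a genuine obstacle here: the calculation is literally the transpose of the one already carried out for Proposition~\ref{prop:hidden-margin}, and the only external ingredient is the block description of Gaussian conditioning.  The one point worth making explicit is that $\phi_\Delta(\gamma)=\Gamma(\gamma)\Gamma(\gamma)^T$ is positive definite under the present hypothesis $\gamma_{i,\{i\}}\neq 0$ --- since $\diag(\gamma_{i,\{i\}}^2)$ is positive definite and $\Gamma_2(\gamma)\Gamma_2(\gamma)^T$ is positive semidefinite --- so that the conditional covariance matrix is well defined and genuinely inverts $\phi_\Delta(\gamma)$.
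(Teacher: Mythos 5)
Your proof is correct. It reaches the same matrix identity as the paper but organizes the Gaussian conditioning step differently, and the difference is worth noting. The paper writes down the joint \emph{covariance} matrix $\Lambda^{-T}\Omega^{-1}\Lambda^{-1}$ of $(\bar Y,\bar H)$ explicitly, extracts the conditional covariance of $\bar Y$ given $\bar H$ as a Schur complement, and then needs the matrix inversion lemma to recognize the inverse of that Schur complement as $\diag(\gamma_{i,\{i\}}^2)+\Gamma_2(\gamma)\Gamma_2(\gamma)^T$. You instead work with the joint \emph{concentration} matrix $\Lambda\Omega\Lambda^T$, observe that its $(\bar Y,\bar Y)$-block is already $\diag(\gamma_{i,\{i\}}^2)+\Gamma_2(\gamma)\Gamma_2(\gamma)^T=\phi_\Delta(\gamma)$, and invoke the standard fact that the conditional concentration matrix of a Gaussian marginal given the rest is exactly the corresponding principal block of the joint concentration matrix. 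The two routes encode the same block-inverse identity, but yours sidesteps both the explicit Schur complement and the Woodbury step, so the algebra is shorter; the paper's version has the minor virtue of displaying the joint covariance matrix itself, which is the object with direct statistical meaning. Your closing remark that $\phi_\Delta(\gamma)$ is positive definite because $\gamma_{i,\{i\}}\neq 0$ is a useful explicit justification that the paper leaves implicit.
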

\begin{proof}
  Concatenating $\bar Y$ and $\bar H$ yields a Gaussian random vector
  with covariance matrix
  \[
  \Lambda^{-T}\Omega^{-1}\Lambda^{-1} = 
  \begin{pmatrix}
    \diag(1/\gamma_{i,\{i\}}^2) &
    \diag(1/\gamma_{i,\{i\}}^2)\Gamma_2(\gamma)\\ 
    \Gamma_2(\gamma)^T\diag(1/\gamma_{i,\{i\}}^2)&
    I+\Gamma_2(\gamma)^T
    \diag(1/\gamma_{i,\{i\}}^2)\Gamma_2(\gamma)
  \end{pmatrix},
  \]
  where we have reused the matrices appearing in (\ref{eq:Omega}) and
  (\ref{eq:digraph-covmatrices}).  By standard results about
  conditional distributions of Gaussian random vectors, the covariance
  matrix of the conditional distribution of $\bar Y$ given $\bar H$ is
  the Schur complement
  \begin{multline*}
    \diag(\gamma_{i,\{i\}}^2)^{-1} - \\
    \diag(\gamma_{i,\{i\}}^2)^{-1}\Gamma_2(\gamma)
    \left(I+\Gamma_2(\gamma)^T
      \diag(\gamma_{i,\{i\}}^2)^{-1}\Gamma_2(\gamma) \right)^{-1}
    \Gamma_2(\gamma)^T\diag(\gamma_{i,\{i\}}^2)^{-1}.
  \end{multline*}
  It follows from the matrix inversion lemma that the inverse of the
  conditional covariance matrix is 
  \[
  \diag(\gamma_{i,\{i\}}^2) + \Gamma_2(\gamma)\Gamma_2(\gamma)^T = \phi_\Delta(\gamma).
  \qedhere
  \]
\end{proof}

According to Proposition~\ref{prop:hidden-condition}, positive definite
matrices in $\im(\phi_\Delta)$ also arise as inverses of conditional
covariance matrices.  Zeros in the inverse of the covariance matrix of a
Gaussian random vector have an appealing interpretation in terms of
conditional independence; see again \cite[Chap.~3]{Drton:2009}.

\medskip

\section*{Acknowledgments}
We thank Anton Leykin, Sonja Petrovic, Bernd Sturmfels, and Caroline
Uhler for helpful discussions and anonymous referees for detailed
comments and for suggesting a simpler alternative proof of Lemma
\ref{lem:bernstein}, which we had previously proven by applying
Bernstein's theorem.  Josephine Yu was supported by an NSF
postdoctoral research fellowship.  Mathias Drton was supported by the
NSF under Grant No.~DMS-0746265 and by an Alfred P. Sloan Fellowship.

\bibliographystyle{amsalpha}
\bibliography{covgraphs}

\end{document}